\renewcommand{\Re}{{\operatorname{Re}\,}}
\renewcommand{\Im}{{\operatorname{Im}\,}}
\renewcommand{\epsilon}{\varepsilon}
\newcommand{\R}{{\mathbb R}}
\newcommand{\lan}{\left\langle}
\newcommand{\ran}{\right\rangle}
\newcommand{\mc}[1]{\mathcal{#1}}
\newcommand{\supp}{{\operatorname{supp\,}}}
\renewcommand{\phi}{\varphi}
\newcommand{\ep}{\varepsilon}
\newtheorem{theo}{Theorem}
\newtheorem{lem}{Lemma}[section]
\newtheorem{prop}[lem]{Proposition}
\theoremstyle{definition}
\newtheorem{defn}[lem]{Definition}
\newtheorem{rem}[lem]{Remark}
\newcommand{\sub}[1]{_{_{\!#1}}}
\newcommand{\ds}{d\sigma\sub{H}}
\newcommand{\Uho}{\mc{U}\sub{H}(\tau_1)}
\newcommand{\Uhtau}{ \mc{U}\sub{H}(\tau)}
\newcommand{\chio}{\chi\sub{\,1}} 
\newcommand{\tchio}{\tilde\chi\sub{\,1}} 
\newcommand{\chit}{\chi\sub{\,2}} 
\newcommand{\tchit}{\tilde\chi\sub{\,2}} 
\title[Goodness bounds]{Lower bounds for eigenfunction restrictions in lacunary regions}
\author{Yaiza Canzani and John A. Toth}
\date{}
\begin{document}

\maketitle
\begin{abstract}  
Let $(M,g)$ be a compact, smooth Riemannian manifold  and $\{u_h\}$ be a sequence of $L^2$-normalized Laplace eigenfunctions 
that has a localized defect measure $\mu$ in the sense that $ M \setminus \supp\!(\pi_* \mu)   \neq \emptyset$ where $\pi:T^*M \to M$ is the canonical projection.
Using Carleman estimates  we prove that for any real-smooth closed hypersurface $H \subset (M\setminus \supp\!(\pi_* \mu))$ sufficiently close to  $ \supp\!(\pi_* \mu),$ and  for all $\delta >0,$
$$ \int_{H} |u_h|^2 d\sigma\sub{H} \geq C_{\delta} e^{-  [ \,  d(H, \supp\!(\pi_* \mu)) + \delta \, ] /h}$$
as $h \to 0^+$. We also show  that the result holds for eigenfunctions of Schr\"odinger operators and give applications to eigenfunctions on warped products and joint eigenfunctions of  quantum completely integrable (QCI) systems.

\end{abstract}

\section{Introduction}

Let $(M,g)$ be a compact, $C^{\infty}$ Riemannian manifold, with or without boundary.  Let $\Delta_g$ be the Laplace operator, 
and  consider $L^2$-normalized Laplace eigenfunctions $u_h \in C^{\infty}(M)$,
\begin{equation}\label{efn}
(-h^2\Delta_g - 1)u_h = 0, \qquad \qquad  \|u_h\|\sub{L^2}=1.
\end{equation}
In the case where $\partial M \neq \emptyset$,  we ask that the boundary be smooth and we impose that the $u_h$ satisfy either Dirichlet  or Neumann boundary conditions on $\partial \Omega$. That is, we ask that either  $u_h=0$ on $\partial M$ or  $\partial_\nu u_h=0$ on $\partial M$,  where $\partial_\nu$ is the normal derivative along the boundary.

Let $H \subset M$ be a smooth hypersurface.  Quantitative unique continuation for eigenfunction restrictions $ u_h |_H$ is an important property with applications to the study of eigenfunction nodal sets and has received a lot of attention in the literature over the past decade,  e.g., \cite{ DF88,  TZ09, BR15, ET15, CT18,   GRS13, Jun14, JZ16, TZ21}. Specifically, let $\{u_{h_j}\}_j$ be a sequence of eigenfunctions. Then, the question of whether there exist constants $C\sub{H} >0$ and $h_0>0$ such that
\begin{equation} \label{Goodness}
\int_H  |u_{h_j} |^2 \ds \geq e^{-C\sub{H}/h_j}
\end{equation}
for all $h_j\in (0,h_0]$ is, in general, an open question.  Here, $\ds$ denotes the measure on $H$ induced by the Riemannian metric. 

In principle, the validity of \eqref{Goodness} depends on {\em both} the particular eigenfunction sequence and the geometry of the hypersurface $H.$ In the terminology of  \cite{TZ09}, hypersurfaces $H$ for which \eqref{Goodness} is satisfied are said to be {\em good} for the eigenfunction sequence $\{u_{h_j}\}_{j=1}^\infty.$ It is a much more subtle problem than the analogue for submanifolds $U \subset M$ with $\dim U = \dim M$; indeed, in the latter case, the estimate $\| u_h \|\sub{L^2(U)} \geq e^{-C_U/h}$ follows by a well-known argument with Carleman estimates \cite{Zw}. For the hypersurface analogue in \eqref{Goodness},  there are comparatively few cases where (\ref{Goodness}) has been proved \cite{ BR15, ET15, CT18, GRS13,  Jun14}.  

 Our first result in Theorem \ref{mainthm1} deals with the case where the $u_{h}$'s have a defect measure, $\mu$, that is well-localized in the sense that $\supp\!(\pi_* \mu) \neq M$,
where 
$
\pi:T^*M \to M
$
 is the natural projection. See \cite[Chapter 5]{Zw} for background on defect measures. Roughly speaking, under this condition, our main result, Theorem \ref{mainthm1}, shows that \eqref{Goodness} is satisfied for a wide class of real-smooth separating hypersurfaces $H \subset M \backslash \supp\!(\pi_* \mu)$  that lie near $\supp(\pi_*\mu)$.

Before stating our first  result, we   introduce  the notion of a {\em lacunary region}. Throughout the paper, we write $\Uhtau$ for a Fermi collar neighbourhood of a hypersurface $H\subset M$ of width $\tau>0$ (see \eqref{fermi c}).  In the following, we work with $h$-pseudodifferential operators in $\Psi_{h}^{0}(\Uhtau)$ that are properly-supported in the Fermi tube $\Uhtau$ (see Subsection \ref{hpdo} for the definition).
In addition, given cutoffs $\chi_1, \chit \in C^{\infty}_0(\Uhtau),$ the notation $\chit \Subset \chio$ means that $\chio\equiv 1$ on $\supp \chit$

 \begin{defn}[\emph{$H$-lacunary region}] \label{weakdef}
 Let $M$ be a $C^\infty$ manifold, $H \subset M$  a closed $C^\infty$-hypersurface,  and $\tau>0$ such that $ \Uhtau$ is a Fermi collar neighborhood of $H$. 
 An $h$-pseudodifferential operator $Q(h) \in \Psi_{h}^{0}(\Uhtau)$ that is properly-supported in the Fermi tube $\Uhtau$ is said to be a \emph{lacunary operator}   for  the  sequence $\{u_h  \}$  if:\smallskip
 
\noindent (i) $Q(h)$ is  $h$-elliptic on $T^*\Uhtau$ (see Definition \ref{d:h elliptic}),\smallskip

\noindent (ii)  
for every  $\chio, \chit \in C^\infty_{0}(\Uhtau, [0,1])$   with $\chit \Subset \chio,$  there exists a constant $C >0$, depending only on the  sequence $\{ u_h \}$, such that
 \begin{equation} \label{lacunary}
 \| \chit Q(h) \chio  \,   \,u_{h} \|\sub{L^2(\Uhtau)}  = O(e^{-C/h}),  \qquad h\to 0^+. 
 \end{equation}
 
   We say that  there is an $H$-\emph{lacunary region} for the sequence $\{u_h  \}$  provided there exist $\tau>0$ and a lacunary operator  $Q(h) \in \Psi_{h}^{0}(\Uhtau)$ for $\{u_h  \}$. 
\end{defn}

  \begin{rem}\label{differential}
Since one can always replace $Q(h)$ with $Q(h)^* Q(h)$ in  \eqref{lacunary} above, without loss of generality, we also assume that the principal symbol $q_0$ is {\em real-valued}.   Also, we note that in the case where $Q(h)$ is $h$-differential, since it is then local, one can replace (\ref{lacunary}) with the condition that $ \| \chit Q(h) \, u_h \|_{L^2}   \leq C_0\,e^{-C/h} \| u_h \|_{L^2}.$
\end{rem}

\begin{rem}
We note that if the eigenfunction sequence $\{u_h\}$ has a defect measure $\mu$ associated to it, then  $Q(h) \in \Psi_{h}^{0}(\Uhtau)$ can only be a lacunary operator for $\{u_h\}$ provided 
$\supp\!(\pi_* \mu) \cap \Uhtau = \emptyset$.
\end{rem}

\begin{rem} 
The requirement that the order of the lacunary operator $Q(h)$ be zero is not necessary and is only made for convenience. We note that there is no loss of generality in making this assumption. For example, if $Q(h)$ is an $h$-elliptic differential operator of order $k \geq1$  and  $L(h)$ is a left-parametrix for $Q(h),$ then one can simply replace $Q(h)$ with  $L(h) Q(h)  \in \Psi_h^0(\Uhtau)$ as the new lacunary operator since by $L^2$-boundedness,
$$ \| L(h) Q(h) u_h \|_{L^2} = O(e^{-C/h}).$$
\end{rem}

In Section \ref{examples} we discuss the existence of lacunary operators in several examples. 
In the following, $d(H,K)$ denotes the Riemannian distance between the hypersurfaces $H$ and $K$.

The main result of the paper  is the following  theorem.

\begin{theo} \label{mainthm1}
Let $(M,g)$ be a compact $C^\infty$ Riemannian manifold.
Let $\{u_h\}$ be a sequence of eigenfunctions satisfying \eqref{efn} with an associated defect measure $\mu$ such that 
$$
K:= \supp\!(\pi_* \mu) \nsubseteq M.
$$
Let $H \subset K^c$  be a closed $C^\infty$-hypersurface or an open submanifold of a closed hypersurface and  suppose  that  $\Uhtau \subset K^c$  is an $H$-lacunary region for  $\{u_h\}$. Then, there exists $\tau_0>0$ such that 
 if  $0<d(H,K)<\tau_0$, it follows that for any $\ep >0,$ there are constants $C_{0}(\ep)>0$ and $h_0(\ep)>0$ with
$$ \| u_h \|\sub{ L^2({H})} \geq C_0(\ep)  e^{ - [d(H,K) + \ep] /h},$$
 for $h \in (0,h_0(\ep)].$
\end{theo}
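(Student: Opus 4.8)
The plan is to run a Carleman estimate argument in the Fermi collar $\Uhtau$, using the lacunary operator $Q(h)$ to kill the "bad" error term that normally obstructs a lower bound for hypersurface restrictions. Let me set up Fermi coordinates $(x_n, x')$ near $H$, where $x_n$ is the signed distance to $H$ (so $H = \{x_n = 0\}$) and $K$ lies on one side, say toward $x_n = -d(H,K)$. I would choose a weight function $\phi(x_n)$ depending only on the normal variable, roughly $\phi(x_n) = -x_n$ up to a small convexification correction needed to make the conjugated operator satisfy the sub-ellipticity (Hörmander) condition; the key point is that $|\nabla \phi| \approx 1$ so that the exponential rate in the final bound is $d(H,K) + \ep$. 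The operator $P(h) = -h^2\Delta_g - 1$ has real principal symbol $|\xi|_g^2 - 1$, and on the support of the defect measure complement the standard Carleman estimate $\|e^{\phi/h} v\|_{L^2} + \|e^{\phi/h} h\nabla v\|_{L^2} \lesssim h^{-1}\|e^{\phi/h} P(h) v\|_{L^2} + (\text{boundary terms on } \partial)$ holds for $v$ supported in a thin slab; applied with $v = \chi u_h$ for a cutoff $\chi$ localizing to the slab $\{-d(H,K)-\delta < x_n < \delta\}$, the commutator $[P(h),\chi]$ produces an error supported where $\chi$ transitions, i.e. near $x_n \approx -d(H,K)$ (close to $K$) and near $x_n \approx \delta$ (past $H$).

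The cut near $x_n = \delta$ is harmless: there $e^{\phi/h} \leq e^{-\delta/h}$ is exponentially small and $u_h = O(1)$ in $L^2$. The dangerous term is the commutator near $x_n = -d(H,K)+\text{(small)}$, where the Carleman weight $e^{\phi/h}$ is of size $e^{(d(H,K) - O(\delta))/h}$, i.e. exponentially large. This is exactly where the lacunary hypothesis enters: on that region $[P(h),\chi]u_h$ can be written as $[P(h),\chi]\, \chi_1 u_h$ for an auxiliary cutoff $\chi_1 \Subset$ a slightly larger cutoff on which $Q(h)$ is elliptic, and since $Q(h)$ is $h$-elliptic on $T^*\Uhtau$ one has a microlocal parametrix $L(h)$ with $\chi_2 = L(h)Q(h)\chi_2 + O(h^\infty)$; inserting this and using \eqref{lacunary} gives $\|\tilde\chi\, [P(h),\chi]\, u_h\|_{L^2} = O(e^{-C/h})$ for some fixed $C > 0$ independent of $\delta$. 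Multiplying by the Carleman weight there, which is at most $e^{d(H,K)/h}$, still leaves a term of size $e^{(d(H,K)-C)/h}$ — but this is where one must be careful: this is only small if $C > d(H,K)$, which need not hold. So instead I would iterate/bootstrap, or more cleanly: choose the slab to terminate at $x_n = -d(H,K) + \eta$ for a fixed small $\eta>0$ so the weight there is $e^{(d(H,K)-\eta)/h}$ times a constant, absorb using that \eqref{lacunary} gives a gain $e^{-C/h}$, and then let $\tau_0$ be small enough that $d(H,K) < \tau_0$ is compatible — the honest fix is that the lacunary bound must be combined with a propagation argument so that the effective comparison is against the $O(e^{-C/h})$ term rather than against $e^{d(H,K)/h}$. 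Concretely, I expect one localizes the Carleman estimate to the region between $K$ and $H$ only, takes the left boundary of the slab to sit just outside $K$ where $\|\chi_{\text{left}} u_h\|$ is controlled by elliptic estimates off $\supp(\pi_*\mu)$ up to $O(e^{-c/h})$ losses — no, the genuinely available input there is the lacunary bound. The net inequality will read
$$\|u_h\|_{L^2(H)}^2 \;\gtrsim\; e^{-2d(H,K)/h}\Big( c_0\, h^{?} - C e^{-2(C' - d(H,K))/h} - e^{-2\delta/h}\Big),$$
and for this to be useful for every $\ep$ one takes $d(H,K) < \tau_0$ with $\tau_0$ chosen below the lacunary constant; re-examining, the cleanest route is: apply the Carleman estimate on the slab $\{-a < x_n < \delta\}$ with $a = d(H,K)$ replaced by any $a' < d(H,K)$ close to it, so that the left commutator sits strictly inside $K^c$ at distance $d(H,K) - a' > 0$ from $K$, then the lacunary bound on $\{x_n < -a' + \text{small}\}$ — which is still in $\Uhtau \subset K^c$ — yields the needed $O(e^{-C_{a'}/h})$ with $C_{a'}$ now larger than $a'$ because the weight at the left end is only $e^{a'/h} \leq e^{d(H,K)/h}$; balancing $a' \uparrow d(H,K)$ and $\delta \downarrow 0$ produces the rate $d(H,K) + \ep$ for any $\ep > 0$.

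I would then extract $\|u_h\|_{L^2(H)}$ from the left side of the Carleman estimate via the standard trace/boundary-term bookkeeping: the Carleman inequality on a slab $\{x_n \in (x_{-}, x_+)\}$ with $x_- < 0 < x_+$ naturally produces a boundary contribution $h\, e^{2\phi(0)/h}\|u_h\|_{L^2(H)}^2$ (together with the $x_n$-derivative trace, which one can drop as it only helps) on the right-hand side after integration by parts in $x_n$ — this is the mechanism by which a hypersurface restriction appears at all. Moving it to the left and using that the bulk term $\|e^{\phi/h}\chi u_h\|_{L^2(\text{slab})}$ on the left is bounded below by a fixed positive constant times $e^{2\phi(x_+)/h}\|\chi u_h\|_{L^2}$ — which is itself bounded below because $u_h$ has mass in the slab away from its support-complement (here one uses that the slab reaches into a region where $\pi_*\mu$ might be supported, or, if $H$ is close enough to $K$, that the total mass argument forces non-negligible $L^2$ mass on one side) — delivers the theorem. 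The main obstacle, as the discussion above shows, is the bookkeeping of the exponential constants at the left end of the slab: one must verify that the lacunary gain $e^{-C/h}$, against a Carleman weight no larger than $e^{d(H,K)/h}$ there, genuinely beats the desired lower bound $e^{-(d(H,K)+\ep)/h}$ — i.e. that the loss in the commutator term is $e^{(d(H,K) - C)/h}$ with an effective margin that can be made to beat $\ep$ after optimizing the slab width; handling this cleanly is why the hypothesis $d(H,K) < \tau_0$ is needed, with $\tau_0$ depending on the lacunary constant $C$.
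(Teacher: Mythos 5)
Your scheme tries to obtain the restriction bound from a single Carleman estimate in a slab of the Fermi collar of $H$, with the weight increasing toward $K$, the lacunary hypothesis absorbing the commutator at the end of the slab nearest $K$, and $\|u_h\|_{L^2(H)}$ emerging as a boundary term. This is not the paper's argument, and as written it does not close, for three separate reasons. First, the bulk term on the good side of your Carleman inequality has no lower bound in your setup: your slab lies in $K^c$, where $\pi_*\mu$ assigns no mass, so there is no a priori lower bound on $\|e^{\phi/h}\chi u_h\|_{L^2(\mathrm{slab})}$. The only such bound is the content of Theorem \ref{mainthm2}, which must be proved separately; the paper does this with a weight anchored not at $H$ but at a strictly convex geodesic sphere $Y$ through a point $q_0\in K$ (the convexity of $Y$, not a convexification of $-x_n$, is what yields the H\"ormander bracket condition), with the control region a small ball at $q_0$ where the defect measure supplies mass and with the commutator deliberately pushed into a black-box region near $H$. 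If instead you extend your slab to reach $K$ so as to capture defect-measure mass, the left commutator then sits where the lacunary hypothesis is unavailable, since $\Uhtau\subset K^c$.

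Second, your key absorption step is false: ellipticity of $Q(h)$ together with \eqref{lacunary} yields only $\|\chi\sub{\,2}u_h\|_{L^2}=O(h^\infty)$, because the parametrix identity $L(h)Q(h)=I+O(h^\infty)$ carries an $O(h^\infty)$, not exponentially small, error. No bound $\|\chi\sub{\,2}u_h\|=O(e^{-C/h})$ with $C>d(H,K)$ can hold, since Theorem \ref{mainthm2} asserts the reverse inequality; so your weighted left commutator is of size $e^{2a'/h}\cdot O(h^\infty)$, exponentially large, and cannot be absorbed. Third, and most fundamentally, a Carleman estimate on a slab with boundary at $H$ controls the interior mass by the full Cauchy data $\|u_h\|^2_{L^2(H)}+\|h\partial_\nu u_h\|^2_{L^2(H)}$ appearing on the majorant side; you may not ``drop the derivative trace as it only helps,'' since discarding a term from the side you are bounding from below gives the inequality in the wrong direction. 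Passing from Cauchy data to Dirichlet data alone is exactly the hard point of the goodness problem. The paper resolves all three issues by a second, independent step (Proposition \ref{key prop}): the lacunary operator is factorized as $A(h)(hD_{x_n}-iB_0)$ with $A(h)$ elliptic, the $O(h^\infty)$ remainder is removed exactly by a variation-of-constants operator $E(h)$ with $\gamma\sub{H}E(h)=0$, and the resulting relation $(hD_{x_n}-iB_0)v_h=O(e^{-C/h})$ is integrated in $x_n$ to transfer the tube lower bound of Theorem \ref{mainthm2} onto $\|u_h\|_{L^2(H)}$ --- in effect controlling the normal derivative by the restriction, which is precisely what your boundary-term bookkeeping cannot do.
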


The proof of Theorem \ref{mainthm1} involves two main ideas: First, in Theorem \ref{mainthm2} we prove a Carleman type estimate adapted to $H$ to obtain an exponential lower bound for $ \| u_h \|\sub{L^2(\mc{U}\sub{H}(\epsilon))}$.   This estimate shows that if  $\supp\!(\pi_* \mu)$ is close enough to $H$, then the positive mass detected by $\mu$ yields the lower bound  on the $L^2$-tubular mass  $ \| u_h \|\sub{L^2(\mc{U}\sub{H}(\epsilon))}$. Second, we show that the ellipticity of  $Q(h)$ allows us to factorize $Q(h)$ in the form $A(h)(hD_{x_n}-iB_0)$ where $A(h)$ is an $h$-elliptic operator, $B_0$ is a positive constant, and $x_n$ denotes the normal direction to $H$ (see Section \ref{factorization}). Then, a further refinement of the factorization argument (see Proposition \ref{key prop} for a precise statement) roughly speaking allows us to show that, since $Q(h)$ is lacunary for $\{u_h\}$, we can work as if  $(hD_{x_n}-iB_0)u_h = O(e^{-C/h})$ in the Fermi tube $\mc{U}\sub{H}(\epsilon)$.  We then use this observation together with a simple integration argument over the tube to obtain a lower bound on $ \| u_h \|\sub{L^2(H)}$ from the lower bound on $ \| u_h \|\sub{L^2(\mc{U}\sub{H}(\epsilon))}$.

The following Carleman estimate adapted to $H$ is crucial to the proof of Theorem \ref{mainthm1} and is of independent interest.

\begin{theo} \label{mainthm2}
Let $(M,g)$ be a compact $C^\infty$ Riemannian manifold.
Let $\{u_h\}$ be a sequence of eigenfunctions satisfying \eqref{efn} and let $\mu$ be a defect measure associated to it. 
Let $H \subset M$  be a closed  $C^\infty$-hypersurface (or any proper, open submanifold of a closed hypersurface) with $d(K,H) >0$, where 
$
K:= \supp\!(\pi_* \mu) \nsubseteq M.
$
Then, there exists $\tau_0>0$ such that if $d(H,K) < \tau_0,$ it follows that for any $\ep >0$
  there are constants  $C_0(\ep)>0$ and $h_0(\ep)>0$ with
  $$ \| u_h \|\sub{L^2(\mc{U}\sub{H}(\epsilon))}  \geq C_0(\ep) e^{ - ( d(H,K) + \ep )/h}$$
for all $h \in (0,h_0(\ep)].$ 
\end{theo}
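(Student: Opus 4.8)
\emph{Overview and Step 1 (mass near $K$).} The plan is to transport the mass that $\mu$ places near $K$ into the collar of $H$ by a semiclassical Carleman estimate for $P_h:=-h^2\Delta_g-1$ with a weight built from the signed distance to $H$, tuned so that the exponential loss equals $d(H,K)$ up to $\epsilon$. First pick $x_0\in K$ (nonempty, since $\pi_*\mu$ has mass $1$) realizing $d(x_0,H)=d(K,H)=:d_0$, which exists by compactness. Testing the defect measure against functions of $x$ alone gives $|u_h|^2\,dx\rightharpoonup\pi_*\mu$, hence for every $r>0$
$$\liminf_{h\to0^+}\|u_h\|_{L^2(B(x_0,r))}^2\ \ge\ (\pi_*\mu)\big(B(x_0,r/2)\big)\ >\ 0,$$
the last step because $x_0\in\supp(\pi_*\mu)$. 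Since the conclusion for $\epsilon\ge d_0/2$ follows a fortiori from the case $\epsilon=d_0/2$, we may assume $\epsilon<d_0/2$; fix $r_0:=\epsilon/2$. Then there are $c_0=c_0(\epsilon)>0$ and $h_1>0$ with $\|u_h\|_{L^2(B(x_0,r_0))}\ge c_0$ for $h\in(0,h_1]$.

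\emph{Step 2 (weight and Carleman estimate).} Let $x_n$ be the signed distance to $H$, oriented so $x_n(x_0)=d_0>0$; it is smooth on the Fermi collar $\mc{U}\sub{H}(L)$ once $L$ is below the normal injectivity radius of $H$, and for $L$ small we may also assume this collar is interior to $M$. On the one–sided slab $\Omega:=\{0<x_n<L\}$ set $\phi(x_n):=-\tfrac1\lambda\big(1-e^{-\lambda x_n}\big)$; for suitable $\lambda>0$ this is a strongly pseudoconvex Carleman weight for $P_h$ on $\Omega$, and one puts $\tau_0:=L/10$, so that $d(H,K)<\tau_0$ forces $B(x_0,r_0)\subset\{0<x_n<L/2\}\subset\Omega$. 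The features of $\phi$ used below are that it is convex ($\phi''=\lambda e^{-\lambda x_n}>0$), strictly decreasing with $|\phi'|=e^{-\lambda x_n}\le1$ on $[0,L]$, and $\int_{\epsilon/4}^{d_0+r_0}|\phi'|\,dt<d_0+r_0$. The ``Carleman estimate adapted to $H$'' one establishes then reads, up to an inessential power of $h$,
$$\|e^{\phi/h}w\|_{L^2}\ \le\ Ch^{-1}\|e^{\phi/h}P_hw\|_{L^2},\qquad w\in C_c^\infty(\Omega),\ \ 0<h\le h_0 .$$

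\emph{Step 3 (application).} Choose $\chi\in C_c^\infty(\{\epsilon/4<x_n<3L/4\})$ with $\chi\equiv1$ on $\{\epsilon/2\le x_n\le L/2\}\supset B(x_0,r_0)$, so that $\supp\nabla\chi$ consists of a near–$H$ part in $\{\epsilon/4\le x_n\le\epsilon/2\}\subset\mc{U}\sub{H}(\epsilon)$ and an outer part in $\{L/2\le x_n\le3L/4\}$. Apply the estimate to $w=\chi u_h$. Since $x_n$ is $1$–Lipschitz and $\phi$ decreasing, $\phi\ge\phi(d_0+r_0)$ on $B(x_0,r_0)$, so the left side is $\ge c_0\,e^{\phi(d_0+r_0)/h}$. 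On the right, $P_hw=[P_h,\chi]u_h=-h^2(\Delta_g\chi)u_h-2h\langle\nabla\chi,h\nabla u_h\rangle$ is supported in $\supp\nabla\chi$; controlling $h\nabla u_h$ by $u_h$ on a slightly larger set via the interior (Caccioppoli) estimate for solutions of $P_hu_h=0$, and using $\|u_h\|_{L^2(M)}=1$ on the outer part, gives $h^{-1}\|e^{\phi/h}P_hw\|_{L^2}\le C\big(e^{\phi(\epsilon/4)/h}\|u_h\|_{L^2(\mc{U}\sub{H}(\epsilon))}+e^{\phi(L/2)/h}\big)$. Because $\phi$ is strictly decreasing and $d_0+r_0<L/2$ one has $\phi(L/2)<\phi(d_0+r_0)$, so the second term is $\le\tfrac{c_0}{2}e^{\phi(d_0+r_0)/h}$ for $h$ small and is absorbed into the left side. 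Rearranging,
$$\|u_h\|_{L^2(\mc{U}\sub{H}(\epsilon))}\ \ge\ \frac{c_0}{2C}\,e^{(\phi(d_0+r_0)-\phi(\epsilon/4))/h}\ =\ \frac{c_0}{2C}\,e^{-\int_{\epsilon/4}^{d_0+r_0}|\phi'|\,dt/h}\ \ge\ \frac{c_0}{2C}\,e^{-(d_0+r_0)/h}\ \ge\ C_0(\epsilon)\,e^{-(d(H,K)+\epsilon)/h}$$
for $h\le h_0(\epsilon)$, since $r_0=\epsilon/2<\epsilon$. When $H$ is only an open submanifold of a closed hypersurface one first replaces the realizing point on $\overline H$ by a nearby interior point of $H$, at the cost of a further $\epsilon$, and runs the argument in a collar of a neighborhood of that point.

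\emph{Main obstacle.} The crux is Step 2: showing that a fixed convexification of the distance to $H$ is strongly pseudoconvex for $P_h=-h^2\Delta_g-1$ on a collar of fixed width. In Fermi coordinates the conjugated symbol of $e^{\phi/h}P_he^{-\phi/h}$ is $|\xi'|^2_{g_{x_n}}+(\xi_n+i\phi'(x_n))^2-1$ plus $O(h)$ and curvature corrections, and on the characteristic set $\{\xi_n=0,\ |\xi'|^2_{g_{x_n}}=1+\phi'^2\}$ Hörmander's bracket condition reduces to the positivity of $4\phi''(\phi'^2+\xi_n^2)-2\phi'\,\partial_{x_n}\!\big(|\xi'|^2_{g_{x_n}}\big)$, i.e.\ of the good term $4\phi'^2\phi''$ against a geometric term bounded in terms of the normal variation of the induced metric. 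This holds once $\phi''=\lambda e^{-\lambda x_n}$ dominates that term uniformly on $\mc{U}\sub{H}(L)$, which is precisely what forces $L$, hence $\tau_0$, to be small in terms of the second fundamental form of $H$ and the curvature of $M$; this is the only point at which the hypothesis $d(H,K)<\tau_0$ is used.
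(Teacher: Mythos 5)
For the case of a \emph{closed} hypersurface $H$ your argument is essentially correct, but it runs on a genuinely different weight than the paper's. You conjugate by a convexified signed distance to $H$ itself, $\phi(x_n)=-\lambda^{-1}(1-e^{-\lambda x_n})$, and on the characteristic set the bracket is $4\phi''\phi'^2-2\phi'\partial_{x_n}r$, so the positivity must come from $\phi''>0$ beating the (sign-indefinite) normal derivative of the induced metrics on the parallel hypersurfaces of $H$; since $\max_\lambda \lambda e^{-2\lambda L}\sim 1/L$, this pins down $L$, hence $\tau_0$, in terms of the second fundamental form data of the collar of $H$. The paper instead takes the weight \emph{linear} in the normal coordinate to an auxiliary geodesic sphere $Y$ passing through $q_0$ (so $|\psi'|\equiv 1$ exactly, with no convexification loss to track), and the bracket positivity is supplied by the strict convexity of $Y$, i.e.\ by $4b(y',\xi')>0$ in \eqref{subelliptic}; there $\tau_0$ depends only on the curvature of $Y$. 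For closed $H$ your route is arguably cleaner: the collar has no lateral boundary, $\chi=\chi(x_n)$ suffices, the only commutator terms sit in the inner slab (which is the quantity you want) and the outer slab (absorbed since $\phi(L/2)<\phi(d_0+r_0)$), and the bookkeeping $|\phi'|\le 1$ delivers the exponent $d(H,K)+\epsilon$.

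The genuine gap is the case, explicitly included in the statement and actually used in the proof of Theorem \ref{mainthm1} (which applies Theorem \ref{mainthm2} to the small piece $\tilde H_\epsilon$), where $H$ is a proper open submanifold of a closed hypersurface. There a tangential cutoff $\theta(x')$ is unavoidable if the conclusion is to be localized to $\mc{U}\sub{H}(\epsilon)$ rather than to the collar of the full closed hypersurface, and $[P_h,\theta\chi]u_h$ then acquires a lateral contribution supported on $\supp\nabla\theta\times\{\epsilon/4\le x_n\le 3L/4\}$. On the portion of that set with $x_n$ near $\epsilon/4$ your weight is as large as $\phi(\epsilon/4)\approx-\epsilon/4$, while the control term on the left is only $c_0e^{\phi(d_0+r_0)/h}\approx e^{-(d_0+\epsilon/2)/h}$ with $\epsilon\ll d_0$; since all you know about $u_h$ on the lateral set is $\|u_h\|_{L^2}\le 1$, the lateral term $\sim he^{-\epsilon/(4h)}$ swamps the left-hand side and cannot be absorbed. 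This is precisely what the paper's tangential damping term $2\tau\sub{H}\rho_\ep(y')$ in \eqref{weight} is for: it forces $\psi\le -\tfrac{9}{10}\tau\sub{H}$ on the transition region, making the lateral commutator exponentially smaller than the control term. Your proposal needs the analogous modification (a tangential drop of size $\approx 2(d_0+r_0)$ in $\phi$ across $\supp\nabla\theta$, together with a check that the modified weight still satisfies the bracket condition) before the open-submanifold case, and hence the application in Theorem \ref{mainthm1}, goes through.
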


\begin{rem} \label{local} We note that since we allow $H$ to have possibly non-empty boundary, that both Theorems \ref{mainthm1} and \ref{mainthm2} are {\em local} results. For example, when $\dim M = 2,$  Theorem \ref{mainthm1} gives exponential lower bounds for eigenfunction restrictions along lacunary $H$, where $H$ can be a curve segment of  arbitrarily small length fixed independent of $h$.
\end{rem}

Our  results also hold for eigenfunctions of a Schr\"odinger operator $P(h) = -h^2 \Delta_g + V - E$  where $V \in C^{\infty}(M;\R)$ is a  real smooth potential and $E$ is a regular value for it:
\begin{equation}\label{efnSchr}
(-h^2 \Delta_g + V - E)u_h = 0, \qquad \qquad  \|u_h\|\sub{L^2}=1.
\end{equation}
 We indicate the fairly minor changes to the proofs  in Section \ref{schrodinger}.

\begin{theo}\label{mainthm3}
Let $(M,g)$ be a compact $C^\infty$ Riemannian manifold.
Let $\{u_h\}$ be a sequence of eigenfunctions satisfying \eqref{efnSchr} with an associated defect measure $\mu$ such that 
$
K:= \supp\!(\pi_* \mu) \nsubseteq M.
$
Let  $H \subset K^c$  be a lacunary $C^\infty$-hypersurface.  Then, there exists  $\tau_0>0$ such that if 
$
0<d(H,K)< \tau_0,
$
then for all  $\ep>0$ there are constants $C_0(\ep) >0$ and $h_0(\ep)>0$ such that for all $ \beta > \max_{x\in M} | V(x) - E|^{1/2}$, 

$$ \| u_h \|\sub{L^2(H)} \geq C_0(\ep) e^{ -    \beta \,  [d(H,K) + \ep]/h},$$
for all  $h \in (0,h_0(\ep)].$ 
\end{theo}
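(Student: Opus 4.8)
The plan is to reduce the Schrödinger case to the Laplace case handled in Theorems \ref{mainthm1} and \ref{mainthm2}, tracking how the potential $V-E$ enters the Carleman weight. The key observation is that the whole argument is local near $H$, and in the Fermi collar $\Uhtau$ the operator $P(h) = -h^2\Delta_g + V - E$ has principal symbol $|\xi|_g^2 + V(x) - E$; the characteristic variety is $\{|\xi|_g^2 = E - V(x)\}$, so the natural "semiclassical speed" governing exponential decay away from the support of the defect measure is $\beta_V(x) := (E-V(x))_+^{1/2}$ rather than $1$. Concretely, I would: (i) redo the Carleman estimate of Theorem \ref{mainthm2} with the conjugated operator $e^{\varphi/h}P(h)e^{-\varphi/h}$, choosing the weight $\varphi$ adapted to the \emph{distance-like function} built from the metric $(E-V)g$ near $H$ — equivalently, replace the Riemannian distance $d(H,K)$ by the Agmon-type distance $d_V(H,K) = \inf_\gamma \int_\gamma (E-V)^{1/2}\,ds$; and (ii) observe that since we only need the estimate for $H$ \emph{sufficiently close} to $K$, we may take $\tau_0$ small enough that $E-V(x) \ge 0$ on the relevant collar (this is the reason for assuming $E$ is a regular value: $K$ lies in the classically allowed region, so $E-V>0$ near $K$), and then bound the Agmon distance crudely by $d_V(H,K) \le \beta\, d(H,K)$ for any $\beta > \max_{x\in M}|V(x)-E|^{1/2}$, which is exactly the constant appearing in the statement.

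The steps in order: First, fix $\beta > \max_x |V(x)-E|^{1/2}$. Second, run the factorization argument of Section \ref{factorization} for $P(h)$ in place of $-h^2\Delta_g-1$: the lacunary ellipticity hypothesis on $Q(h)$ is unchanged (it is a statement about $\{u_h\}$, not about the operator), so $Q(h)$ still factors as $A(h)(hD_{x_n} - iB_0)$ with $A(h)$ elliptic and $B_0>0$ a constant, and Proposition \ref{key prop} still applies verbatim to give $(hD_{x_n}-iB_0)u_h = O(e^{-C/h})$ in $\mc{U}\sub{H}(\epsilon)$. Third, establish the analogue of Theorem \ref{mainthm2}: choose $\tau_0$ so small that $E-V > 0$ on $\mc{U}\sub{H}(\tau_0)$ and so that $\beta^2 > E - V(x)$ there; then the Carleman weight $\varphi(x) = (\beta + \delta)\, x_n$ (with $x_n$ the signed normal coordinate, oriented toward $K$) is subharmonic-type for $e^{\varphi/h}P(h)e^{-\varphi/h}$ in the required sense because its "momentum" $\beta+\delta$ dominates the reach $(E-V)^{1/2}$ of the characteristic variety; pairing against $u_h$ and using the positive $\mu$-mass that enters the collar from the $K$-side yields $\|u_h\|_{L^2(\mc{U}\sub{H}(\epsilon))} \ge C_0(\epsilon) e^{-\beta(d(H,K)+\epsilon)/h}$. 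Fourth, combine with the factorization output and integrate along the normal direction over the collar exactly as in the proof of Theorem \ref{mainthm1} to pass from the tubular lower bound to $\|u_h\|_{L^2(H)} \ge C_0(\epsilon) e^{-\beta(d(H,K)+\epsilon)/h}$.

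I expect the main obstacle to be the bookkeeping in the Carleman estimate of the third step: one must verify that the sub-ellipticity (Hörmander bracket) condition $\{\operatorname{Re} p_\varphi, \operatorname{Im} p_\varphi\} > 0$ on the characteristic set of the conjugated symbol $p_\varphi(x,\xi) = |\xi + i\,d\varphi|_g^2 + V(x) - E$ still holds with the modified weight, uniformly down to $h_0(\epsilon)$, and that the lower-order term $V-E$ does not spoil the convexity argument — this is where the requirement $\beta > \max_x|V-E|^{1/2}$ is actually used, to dominate the indefinite contribution of the potential. The remaining parts are routine: the factorization (Section \ref{factorization}) and the tube-integration argument (proof of Theorem \ref{mainthm1}) are insensitive to the presence of a smooth potential since they only use the structure of $Q(h)$ and the one-dimensional ODE in $x_n$, and the reduction of the Agmon distance to $\beta\, d(H,K)$ is an elementary pointwise estimate valid once $\tau_0$ is chosen small. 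Full details are given in Section \ref{schrodinger}.
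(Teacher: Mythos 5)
Your overall strategy---leave the factorization (Proposition \ref{key prop}) and the tube-integration step untouched and only modify the Carleman estimate by inserting a factor $\beta$ into the weight---is exactly the paper's strategy, and you correctly identify that the whole difficulty sits in verifying the H\"ormander bracket condition for the conjugated symbol. But that is precisely where your proposal breaks down. The weight $\varphi=(\beta+\delta)x_n$, with $x_n$ the Fermi normal coordinate to $H$, does \emph{not} satisfy $\{\Re p_\varphi,\Im p_\varphi\}>0$ on $\{p_\varphi=0\}$ for a general hypersurface $H$. Indeed $\Re p_\varphi=|\xi|_g^2-(\beta+\delta)^2+V-E$ and $\Im p_\varphi=2(\beta+\delta)\xi_n$, so the characteristic set is $\{\xi_n=0,\ a(x',\xi')=(\beta+\delta)^2+E-V\}$, which is \emph{never} empty precisely because $\beta^2>\max|V-E|$; the conjugated operator is therefore not elliptic and the subelliptic estimate is genuinely needed. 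On that set the bracket equals $4(\beta+\delta)\,b\sub{H}(x',\xi')-2(\beta+\delta)\,\partial_{x_n}V+O(\tau_0)$, where $b\sub{H}$ is the second fundamental form of $H$: for flat or concave $H$, or when $\partial_{x_n}V$ has the unfavorable sign, this is not positive. Your justification ("the momentum $\beta+\delta$ dominates the reach of the characteristic variety") is not the operative mechanism. The paper's fix (Theorem \ref{mainthm4}) is to base the weight on a geodesic sphere $Y$ through $q_0$ of \emph{small radius}, whose principal curvatures blow up as the radius shrinks; condition \eqref{e:curv} chooses the radius so that $2b\sub{Y}$ dominates $\max|\nabla V|$ on covectors with $a\sub{Y}\geq\tfrac12\delta_0$, $\delta_0=\beta^2-\max|V-E|$. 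This is also where $\beta>\max_x|V-E|^{1/2}$ is actually used: it forces $a\sub{Y}(y',\xi')=\beta^2+E-V+O(\tau_0)\geq\tfrac12\delta_0$ on the characteristic set, so the convexity term is uniformly positive and beats the $\nabla V$ term. The weight must also carry the tangential damping term $2\tau\sub{H}\rho_\ep(y')$; with a purely linear weight the lateral (transition-region) commutator terms are not absorbed.

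A second, smaller error: you propose to shrink $\tau_0$ so that $E-V\geq0$ on the relevant collar. This cannot be arranged and is not needed. By \eqref{allowed mu} the support $K$ lies in the allowed region, but the intended $H$ lies in the forbidden region $\{V>E\}$ (this is the point of the warped-product and QCI applications, and is how the lacunarity of $H$ is produced at the end of Section \ref{schrodinger}); hence $V>E$ near $H$ however small $\tau_0$ is, and your "Agmon-type distance" $\inf_\gamma\int_\gamma(E-V)^{1/2}\,ds$ is not even real there (the correct Agmon metric in the forbidden region is $(V-E)_+|dx|^2$, as in \eqref{agmon}). The paper makes no sign assumption on $V-E$ in the Carleman rectangle; the single hypothesis $\beta^2>\max|V-E|$ handles both signs. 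With the weight replaced by the paper's $\psi=\beta y_n+2\tau\sub{H}\rho_\ep(y')$ in coordinates adapted to the small geodesic sphere $Y$, the remainder of your outline goes through as you describe.
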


\subsection{Outline of the paper.} The Carleman estimates required for the proof of Theorem  \ref{mainthm2} are proved in Section 2.  In Section \ref{goodness} we combine the result in Theorem \ref{mainthm2} with the operator factorization argument in Proposition \ref{key prop}  to prove Theorem \ref{mainthm1}.   In Section \ref{schrodinger}, we indicate the relatively minor changes needed to handle the case of Schr\"{o}dinger operators. Finally, in Section \ref{examples} we present several examples to which our results apply.

\subsection{Acknowledgements.}  Y.C. was supported by the Alfred P. Sloan Foundation, NSF CAREER Grant DMS-2045494, and NSF Grant DMS-1900519. J.T. was
partially supported by NSERC Discovery Grant \# OGP0170280 and by the French National Research Agency project Gerasic-ANR- 13-BS01-0007-0.

\section{Carleman estimates: Proof of Theorem \ref{mainthm2}} \label{carleman}

This section is devoted to the proof of Theorem \ref{mainthm2}.  In Section  \ref{weight function} we construct  a Carleman weight and in Section \ref{regions} we introduce the relevant regions we will use in the proof of Theorem \ref{mainthm2} to infer the lower bound on the eigenfunction $L^2$-mass near $H$ using the assumption on the support of the defect measure $\mu$.  The actual proof of Theorem \ref{mainthm2} is given in Section \ref{proof of 2}.

\subsection{Carleman weight} \label{weight function}

 Given a hypersurface $H \subset K^c,$ using the fact that $K$ is closed, we choose $q_0 \in K$ and a corresponding point $q\sub{H} \in H$ with the property that
 $$ d(q_0, q\sub{H}) = d(K,H) >0.$$

Given  $q_0 \in K$ we let $ Y = Y_{q_0} \subset M$ be a geodesic sphere containing $q_0$ with center $q\sub{H},$ so that the line segment $\overline{q_0 q\sub{H}}$ is a radial geodesic  (see Figure 1).

Let $(y', y_n)$ be geodesic normal coordinates adapted to $ Y,$  so that
\begin{equation}\label{Y coord}
 Y=\{y_n=0\}, \qquad q_0=(0,0),
\end{equation}
and with $\{y_n>0\}$ corresponding to points in the interior of the ball whose boundary is $ Y$ (see \cite[Corollary C.5.3]{HOV3}).  Let $\tau\sub{ Y}>0$ be  chosen so that the   $y_n$-coordinates  are defined for $- 2 \tau\sub{ Y} \leq y_n \leq 2 \tau\sub{ Y}$ and let  $c\sub{ Y}>0$, be chosen so  that the remaining tangential coordinates $y'$ along $Y$ are defined for  $|y'|< c\sub{ Y}$ .  We also fix an arbitrarily small constant $\ep\sub{ Y}>0$ which will be specified later on (see (\ref{constconditions})).

For $0<\tau\leq \tau\sub{ Y}$ and $0<\ep\leq \ep\sub{Y}$  we  will carry out a Carleman argument in the  rectangular domain (see Figure 1)

\begin{equation}\label{W}
\mathcal{W}\sub{ Y}(\tau,\ep):=\{(y', y_n):\; |y'|<c\sub{ Y},\;\; -2\ep<y_n<\tau+2\ep\},
\end{equation}\

We also introduce a    tangential cutoff $\rho_\ep=\rho_\ep(y')\in C^{\infty}_{0}( \{ |y'| < c\sub{Y} \})$ along the geodesic sphere $Y$ satisfying 
\begin{enumerate}
\item $\supp \, \rho_\ep  \subset \{|y'| > 3\ep \},$
\item $ \rho_\ep \leq  0$,
\item $\rho_\ep(y) \equiv -1$ on $\{ \tfrac{1}{3}c\sub{ Y}< |y'|   <  c\sub{ Y}\}$,
\item there exists $c>0$, independent of $\ep$, such that 
 \begin{equation}\label{rho'}
 | \partial_{y'} \rho_\ep(y')|\leq c, \qquad  |y'|<c\sub{Y}, \qquad  0<\ep<\ep\sub{ Y}.
 \end{equation}
 \end{enumerate}

 For $0<\tau<\tau\sub{Y}$ and $0<\ep\leq \ep\sub{Y}$,  we consider the putative  weight function,  $\psi=\psi_{\ep,\tau}$,

\begin{equation} \label{weight}
\psi \in C^{\infty}(\mc{W}\sub{ Y}(\tau, \ep\sub{Y})), \qquad \psi(y',y_n) := y_{n}  +  2\tau  \rho_\ep(y'),
 \end{equation}\
 
and form the conjugated operator 
\begin{equation}\label{P}
P_{\psi}(h):= e^{\psi/h} P(h) e^{-\psi/h} : C^{\infty}_{0}(\mc{W}\sub{ Y}(\tau,  \ep\sub{Y})) \to C^{\infty}_0(\mc{W}\sub{ Y}(\tau,  \ep\sub{Y}))
\end{equation}
with principal symbol
\begin{equation}\label{ppsi}
p_{\psi}(y,\xi) := p(y, \xi + i \partial_{y} \psi),
\end{equation}
where $p(y,\xi) = |\xi|_{g(y)}^2 -1.$  We note for future reference that from (\ref{weight}),  the weight function $\psi$ implicitly depends on the parameters $\ep$ and $\tau.$

\begin{lem}\label{weight lemma}
There exists $ \tau\sub{ Y}>0$, depending only on the principal curvatures of $Y$,  such that for all $0<\tau\leq \tau\sub{Y}$ and all  $0<\ep <\ep\sub{ Y}$  the function $\psi \in  C^{\infty}(\mc{W}\sub{ Y}(\tau, \ep\sub{Y}))$  in (\ref{weight}) is a Carleman weight with
\begin{equation} \label{carlwt}
\{ \Re p_{\psi}, \Im p_{\psi} \} >0 \qquad \text{on}\;\; \{p_{\psi}=0\}.
 \end{equation}
\end{lem}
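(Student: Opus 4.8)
The plan is to verify the Poisson-bracket positivity condition \eqref{carlwt} by a direct computation in the geodesic normal coordinates $(y',y_n)$ adapted to the geodesic sphere $Y$, exploiting two structural facts: that $\partial_{y}\psi$ is \emph{nearly} the conormal covector $dy_n$ (with a small tangential error of size $O(\tau)$ coming from the $\rho_\ep$ term), and that the level-set geometry of $y_n$ is governed by the principal curvatures of $Y$, which can be made as small as we like by shrinking $\tau\sub{Y}$ relative to the injectivity radius. First I would record the symbol computation: writing $p(y,\xi)=|\xi|_{g(y)}^2-1$ and $\psi = y_n + 2\tau\rho_\ep(y')$, we have
\begin{equation*}
p_\psi(y,\xi) = |\xi + i\partial_y\psi|_{g(y)}^2 - 1 = \big(|\xi|_{g(y)}^2 - |\partial_y\psi|_{g(y)}^2 - 1\big) + 2i\,\langle \xi, \partial_y\psi\rangle_{g(y)},
\end{equation*}
so $\Re p_\psi = |\xi|_g^2 - |\partial_y\psi|_g^2 - 1$ and $\Im p_\psi = 2\langle\xi,\partial_y\psi\rangle_g$. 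On the set $\{p_\psi = 0\}$ both vanish, which pins $\xi$ to the codimension-two set where $\xi\perp_g\partial_y\psi$ and $|\xi|_g^2 = 1 + |\partial_y\psi|_g^2$.

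Next I would compute $\{\Re p_\psi, \Im p_\psi\}$ on that set. This is the standard ``sub-ellipticity'' bracket for a Carleman weight: for $p$ with real part $a$ and imaginary part $b$, one has on $\{a=b=0\}$ the identity $\{a,b\} = \{\,p_\psi,\overline{p_\psi}\,\}/(2i)$ up to sign, and the leading term is the classical expression involving the Hessian of $\psi$ against the metric plus curvature terms. Concretely, abbreviating $\phi = \psi/h$-free weight $\psi$, the bracket reduces (on $\{p_\psi=0\}$) to a quadratic form of the shape
\begin{equation*}
\{\Re p_\psi, \Im p_\psi\} = 4\Big( \operatorname{Hess}_g\psi\,(\xi,\xi) + \operatorname{Hess}_g\psi\,(\partial_y\psi,\partial_y\psi) \Big) + (\text{lower order in }\partial g),
\end{equation*}
evaluated where $\langle\xi,\partial_y\psi\rangle_g = 0$ and $|\xi|_g^2 = 1+|\partial_y\psi|_g^2$. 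Now the key geometric input enters: at $y_n=0$ the coordinates are normal for $Y$, so $\operatorname{Hess}\,y_n$ at a point of $Y$ equals the second fundamental form of $Y$ (the principal curvatures), and for $|y_n|\le 2\tau\sub{Y}$ its deviation is controlled. The contribution of $2\tau\rho_\ep(y')$ to $\operatorname{Hess}_g\psi$ is $O(\tau)$ by \eqref{rho'} and the bound on $\partial_{y'}^2\rho_\ep$ (which, even if not displayed, is uniform since $\rho_\ep$ interpolates over a fixed-width band), while its contribution to $|\partial_y\psi|_g^2$ makes $\partial_y\psi = dy_n + O(\tau)$, so $|\partial_y\psi|_g^2 = g^{nn} + O(\tau)$ stays bounded below. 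The upshot is that $\{\Re p_\psi, \Im p_\psi\}$ equals $4 g^{nn}(\xi_n)^2 \cdot(\text{curvature of }Y) + 4|\partial_y\psi|_g^2\cdot(\cdots) + O(\tau)$ plus a strictly positive ``flat'' term coming from the fact that $\psi$ is, to leading order, a \emph{linear} function $y_n$ whose graph is transverse to the characteristic variety. I would isolate that strictly positive flat term — it is bounded below by a constant depending only on $\sup_{\mc{W}\sub{Y}}|g - g(q_0)|$ and hence, after further shrinking $\tau\sub{Y}$, by a positive constant uniform in $\ep$ — and absorb all the curvature and $\rho_\ep$ terms into it by choosing $\tau\sub{Y}$ small compared to the reciprocal of the maximal principal curvature of $Y$ and to the $C^2$ size of the metric in the coordinate patch.

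The main obstacle I anticipate is bookkeeping the curvature terms with the correct signs: the second fundamental form of a geodesic sphere has a definite sign (it is the Hessian of the distance function, which is positive on the exterior side), and one has to make sure this sign \emph{helps} rather than hurts in \eqref{carlwt}, or — failing that — that it is genuinely lower-order and can be dominated. Concave pieces of $Y$ would enter with the unfavourable sign, which is exactly why the hypothesis is that $\tau\sub{Y}$ depend on the principal curvatures: one needs $\tau\sub{Y}\cdot(\text{max principal curvature})$ small enough that the flat positive term wins. I would therefore carry out the estimate pointwise on $\{p_\psi = 0\}\cap\overline{\mc{W}\sub{Y}(\tau,\ep\sub{Y})}$, extract the explicit constant $c_0>0$ for the flat term at $q_0$ with $g=g(q_0)$ (where the computation is literally the constant-coefficient model $p = |\xi|^2-1$, $\psi = y_n$, giving $\{\Re p_\psi,\Im p_\psi\} = 4 > 0$), and then note that all perturbations — metric variation, $\operatorname{Hess}y_n$, and the $2\tau\rho_\ep$ correction — are each $o(1)$ as $\tau\sub{Y},\ep\sub{Y}\to 0$ uniformly on the compact characteristic set, so for $\tau\sub{Y}$ small the bracket stays $\ge c_0/2 > 0$. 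Finally I would remark that the smoothness claim $\psi\in C^\infty(\mc{W}\sub{Y}(\tau,\ep\sub{Y}))$ is immediate from \eqref{weight} since $y_n$ and $\rho_\ep$ are smooth, so nothing beyond the bracket positivity needs checking, and that the stated dependence of $\tau\sub{Y}$ only on the principal curvatures of $Y$ is exactly what the absorption argument produces.
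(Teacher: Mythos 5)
Your overall framework (write the bracket as a quadratic form in the Hessian of $\psi$, work in the normal coordinates adapted to $Y$, absorb the $\rho_\ep$ and metric-variation errors by shrinking $\tau\sub{Y}$) is the right shape, but the argument as written identifies the wrong source of positivity, and the step where you ``extract the explicit constant $c_0>0$ for the flat term'' fails. In the constant-coefficient model $p=|\xi|^2-1$, $\psi=y_n$, one gets $p_\psi=(|\xi|^2-2)+2i\xi_n$, so both $\Re p_\psi$ and $\Im p_\psi$ are independent of $y$ and $\{\Re p_\psi,\Im p_\psi\}\equiv 0$, not $4$. A linear weight is only a \emph{limiting} Carleman weight: since $\operatorname{Hess}\psi=0$, your own formula $4\bigl(\operatorname{Hess}_g\psi(\xi,\xi)+\operatorname{Hess}_g\psi(\partial_y\psi,\partial_y\psi)\bigr)$ vanishes identically in the flat model. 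There is no ``strictly positive flat term'' to absorb anything into, so the perturbative scheme you describe collapses.

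The actual mechanism — and the reason the lemma builds the weight from a \emph{geodesic sphere} $Y$ rather than from $H$ or a linear function — is that the entire positivity comes from the term you propose to treat as an absorbable error. On $\{p_\psi=0\}$ one has $\xi_n=O(\tau\sub{Y})$ and $a(y',\xi')=2+O(\tau\sub{Y})$, so $\xi$ is essentially tangential with tangential norm bounded below; using the normal form $p=\xi_n^2+a(y',\xi')-2y_nb(y',\xi')+O(y_n^2|\xi'|^2)-1$ of \cite{SjZw}, the bracket equals $4b(y',\xi')+O(\tau\sub{Y})$, where $b$ is the second fundamental form of $Y$. Because $Y$ is a geodesic sphere it is strictly convex (there are no ``concave pieces'' to worry about), so $b\geq C\sub{Y}\,a$ and the bracket is bounded below by $2C\sub{Y}+O(\tau\sub{Y})$; shrinking $\tau\sub{Y}$ relative to the principal curvatures then closes the argument. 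Note also that your candidate terms are both negligible: the piece you write as $4g^{nn}(\xi_n)^2\cdot(\text{curvature})$ is $O(\tau\sub{Y}^2)$ since $\xi_n=O(\tau\sub{Y})$ on the characteristic set, and $\operatorname{Hess}_g\psi(\partial_y\psi,\partial_y\psi)$ is $O(\tau)$ because $y_n$ is a signed distance function, so its Hessian annihilates the radial direction. The dominant term is $\operatorname{Hess}_g y_n$ evaluated on the \emph{tangential} part of $\xi$, and its strict positivity is a hypothesis on the geometry (convexity of $Y$), not a perturbation of a flat estimate.
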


\begin{proof}
To prove the claim in \eqref{carlwt} note that (see e.g. \cite[Lemma 2.1]{SjZw}) the principal symbol of $P(h)=-h^2\Delta_g-I$ in the $(y', y_n)$ coordinates, with dual variables $(\xi', \xi_n)$,  takes the form
 \begin{equation}\label{ppal-symbol}
 p(y, \xi)=\xi_n^2+a(y', \xi') -2y_n b(y', \xi') + R(y, \xi') -1,
 \end{equation}
with $R(y, \xi')=O(y_n^2|\xi'|^2)$.
The expansion in (\ref{ppal-symbol}) holds near $ Y$  for $y_n$ small. Here, $a$ is the quadratic form dual to the induced metric on $ Y$ and $b$ is the quadratic form dual to the second fundamental form for $ Y$. In particular, since $ Y$ is strictly convex, and the eigenvalues of $b(y', \xi')$ with respect to $a(y', \xi')$ are the principal curvatures of $ Y$, the eigenvalues are strictly positive.

 Note that by \eqref{ppal-symbol}, with $p_\psi$ as in \eqref{ppsi}, we have 
  $$
  \Im p_{\psi} = 2\xi_n +  O(\tau\sub{Y}),  \qquad
 \Re p_{\psi}=p(x, \xi)-1 +O(\tau\sub{Y}^2),
 $$
and so, $\Re p_{\psi}=a(y', \xi')-2+O(\tau\sub{Y})$ on $p_{\psi}=0.$ Consequently,
\begin{align} \label{char}
 \{p_{\psi}=0\}
&=  \big\{ (y,\xi) \in T^* (\mc{W}\sub{ Y}(\tau, \ep\sub{Y})):\;  a(y', \xi') = 2 + O(\tau\sub{Y}), \; \;\;\xi_n = O(\tau\sub{Y})\big\}.
\end{align}
A direct computation shows that there is $C\sub{Y}>0$ such that for $(y,\xi) \in \{p_{\psi}=0\},$
\begin{align} \label{subelliptic}
 \{ \Re p_{\psi}, \Im p_{\psi} \} (y,\xi) 
&=  4b(y', \xi')   + O(\tau\sub{Y})\\
 & \geq C\sub{Y} a(y',\xi') + O(\tau\sub{Y}) 
 \geq 2 C\sub{Y} + O(\tau\sub{Y}). \notag
\end{align}

In (\ref{subelliptic}) we have used  that the eigenvalues of $b(y',\xi')$ are strictly positive and that we are restricting $b$ to co-vectors with norm bounded below since $a(y', \xi') = 2 + O(\tau\sub{Y})$ on $\{p_{\psi}=0\}$.
Note that  the $O(\tau\sub{Y})$ errors above depend only on the principal curvatures of $ Y$. Therefore, there is  a sufficiently small $\tau\sub{Y} >0$,  depending only on the curvature of $ Y$, such that the RHS of (\ref{subelliptic}) is positive and consequently, \eqref{carlwt} holds for $0<\tau<\tau\sub{Y}$. 
\end{proof}

For concreteness, in the following we will  assume that with $\tau\sub{Y} >0$ as in Lemma \ref{weight lemma}, 
\begin{equation} \label{constconditions}
 \ep\sub{ Y} <\min(\tfrac{1}{10} \tau\sub{Y}, \tfrac{1}{10}c\sub{ Y}).
  \end{equation}

\subsection{Control, transition, and black-box regions} \label{regions}
Given $H \subset K^c$ a smooth hypersurface, we choose points $q_0 \in K=\supp\!(\pi_* \mu)$ and $q\sub{H} \in H$ as in Subsection \ref{weight function} and let $\tau\sub{Y}>0$ be as in Lemma \ref{weight lemma}. In the following, we also assume that $\tau\sub{Y}<r_{q_0},$ where $r_{q_0}>0$ is the maximal radius  for which the exponential map $\exp_{q_0}$ is a diffeomorphism on $B(q_0, 2r_{q_0}),$ and set
\begin{equation}\label{distance}
\tau\sub{H}:=d(q_0, q\sub{H})= {d(K,H),}  \qquad 0 < \tau\sub{H} < \tau\sub{Y}<  r_{q_0}.
\end{equation}

In addition, by possibly shrinking $\ep\sub{Y}>0$ further in (\ref{constconditions}), we assume from now on that
\begin{equation} \label{ep}
 \tau\sub{H} + 2 \ep\sub{Y} <\tau\sub{Y}. 
 \end{equation}

Let $\gamma$ be a unit speed geodesic joining $q_0=\gamma(0)$ with $q\sub{H}=\gamma(\tau\sub{H})$.  To fix the reference geodesic sphere $Y$ once and for all,  we let $q=\gamma(r_{q_0})$  and set 

\begin{equation}\label{Sigma def}
 Y:=\partial B(q, r_{q_0}).
\end{equation}

As in \eqref{Y coord}, we continue to work in geodesic normal coordinates   $(y', y_n): \mc{U}\sub{Y}(\tau\sub{Y}) \to \R^n$ adapted to $ Y$ with
\begin{equation}\label{fermi c}
 Y=\{y_n=0\}, \qquad q_0=(0,0),
\end{equation}
 and with $\{y_n>0\}$ corresponding to points in the interior of $B(q, r_{q_0})$.
In these coordinates,  $\mc{U}\sub{Y}(\tau\sub{Y})=\{(y', y_n):\; |y_n|<\tau\sub{Y}\}$.

 In the following, we assume that $\tau\sub{ H}$,  $\ep\sub{ Y}$, and  $c\sub{ Y}$  satisfy \eqref{constconditions},   (\ref{ep}) and
assume
\begin{equation}\label{epsilon}
 0<\ep < \min(  \ep\sub{Y}, \tfrac{1}{10}\tau\sub{H}).
\end{equation}

We  carry out the Carleman argument in the rectangular domain $\mc{W}\sub{ Y}(\tau\sub{H}, \ep)$ defined in \eqref{W}, where $\tau\sub{H} = d(q_0,H).$ Within this set, we identify three key regions: \, the  control region $ U_{\!cn}(\epsilon)$,  \, the  transition region $U_{tr}(\epsilon)$, and the black-box region  $U_{bb}(\epsilon)$. Here, $U_{\!cn}(\epsilon)$ refers to an $\epsilon$-tube near $ Y$,  $U_{bb}(\epsilon)$ is the region where we wish to prove lower bounds, and $U_{tr}(\epsilon)$ are the {transitional} regions connecting the two former regions (see Figure 1).   To define these we need the following cut-off functions.

Let  $\ep>0$ be a small constant satisfying the {bound in \eqref{epsilon}.} We define $  \chi_{\epsilon,_Y} \in C^{\infty}(\R;[0,1])$ with 
 $$
 \begin{cases}
  \chi_{\epsilon,_Y}(y_n) = 1& \;\; y_n > - \tfrac{1}{2}\epsilon,\\
  \chi_{\epsilon,_Y}(y_n) = 0& \;\; y_n < - 2 \epsilon,
 \end{cases}  \qquad \qquad \supp \, \partial   \chi_{\epsilon,_Y}  \subset \{ -2\epsilon < y_n < -\epsilon \}.
 $$ 
  Let  $  \chi_{\epsilon,_H} \in C^{\infty}(\R;[0,1])$ be a cutoff localized around  $\{y_n =\tau\sub{H}\}$ with
  $$ 
\begin{cases}
  \chi_{\epsilon,_H}(y_n) = 0&\quad  y_n > \tau\sub{H}  + 2 \epsilon,\\
  \chi_{\epsilon,_H}(y_n) = 1& \quad y_n < \tau\sub{H} - 2 \epsilon,
\end{cases}  \qquad \qquad  
\supp \, \partial   \chi_{\epsilon,_H}  \subset \{ | y_n - \tau\sub{H}|<\epsilon \}.
$$
Let $\chi_{\epsilon,tr} \in C^{\infty}_0(\R^{n-1};[0,1])$ be a {transitional} cutoff with
$$
\begin{cases} 
  \chi_{\epsilon,tr}(y') = 0& \quad  |y'| >c\sub{ Y},\\
  \chi_{\epsilon,tr}(y') = 1& \quad |y'|  <  4\ep,
\end{cases}
 \qquad \qquad 
 \supp \, \partial   \chi_{\epsilon,tr}  \subset \{ \tfrac{1}{3}c\sub{ Y}< |y'|   < c\sub{ Y} \}.
$$

\begin{figure}
  \caption{}
\includegraphics[width=15cm]{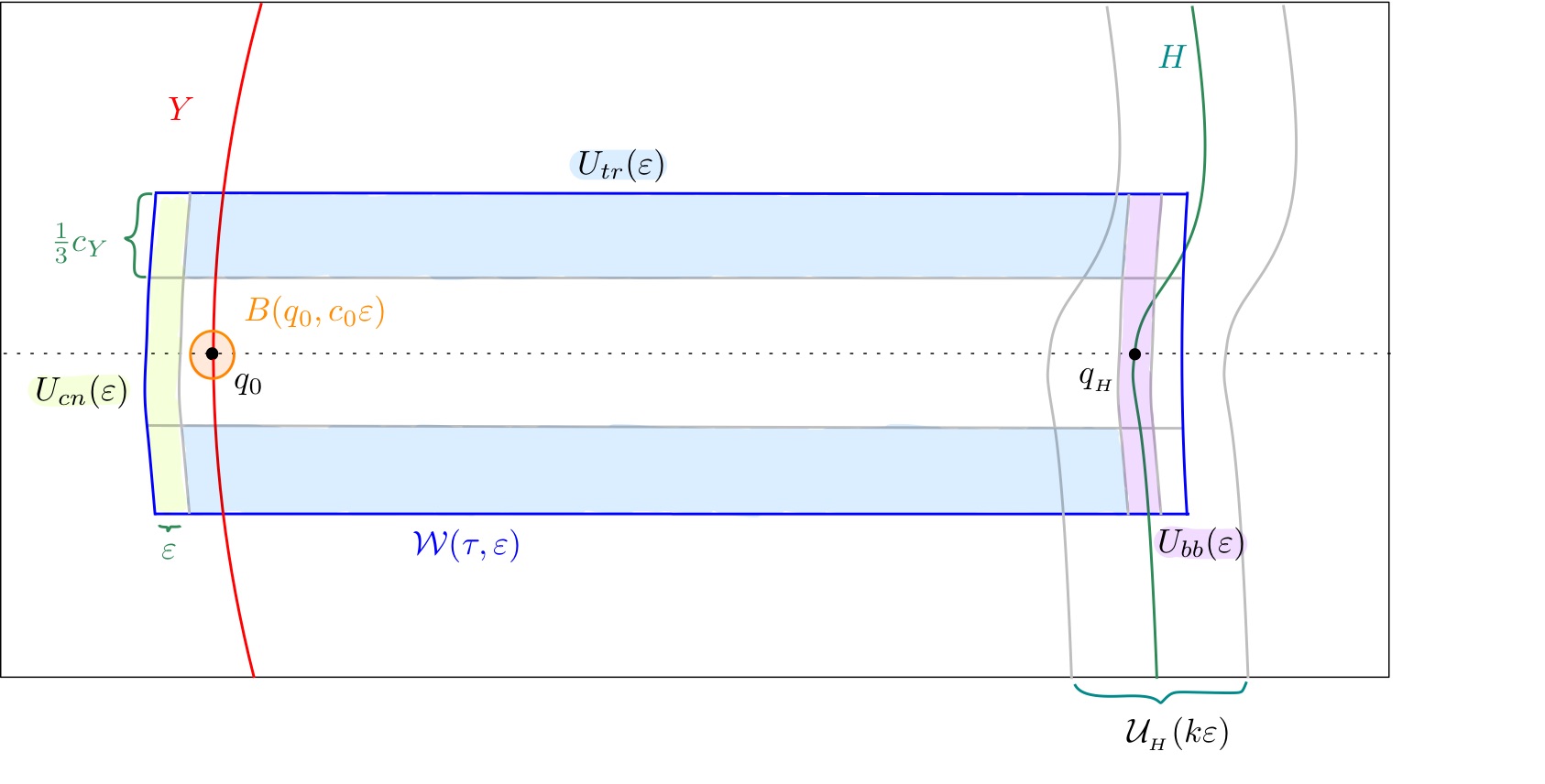}
\end{figure}

Finally, we define the cutoff function  $\chi_{\ep} \in C^{\infty}_{0}(\mc{W}\sub{ Y}(\tau\sub{H},\ep\sub{Y}))$ with 
\begin{equation} \label{cutoff}
\chi_{\epsilon}(y',y_n): =   \chi_{\epsilon,_Y}(y_n) \cdot   \chi_{\epsilon,_H}(y_n) \cdot   \chi_{\epsilon,tr}(y').
\end{equation}
By the Leibniz rule it follows that 
\begin{equation}\label{leib0}
\supp \,  \partial \chi_{\epsilon} \subset  U_{\!cn}(\epsilon) \cup U_{bb}(\epsilon) \cup U_{tr}(\epsilon),
\end{equation}
where, as shown in Figure 1,
\begin{align*}
 U_{\!cn}(\epsilon)&:= \supp \, \partial   \chi_{\epsilon,_Y}  \times  \supp   \chi_{\epsilon,tr}
 \subset \{-2\epsilon < y_n < -\epsilon, \; |y'|<c\sub{ Y}\}\\
 U_{bb}(\epsilon)&:= \supp \, \partial   \chi_{\epsilon,_H}  \times \supp \,  \chi_{\epsilon,tr}
 \subset\{ | y_n - \tau\sub{H}|<\epsilon,\; |y'|<c\sub{ Y}\}\\
 U_{tr}(\epsilon)&:= \supp (  \chi_{\epsilon,_Y}\chi_{\ep, _H})\times \supp \, \partial   \chi_{\epsilon,tr}
 \subset \{ -2\ep<y_n<\tau\sub{H}+\ep, \;  \tfrac{1}{3}c\sub{ Y}< |y'|   < c\sub{ Y}\}.
\end{align*}

 We note that one can refine the containment in \eqref{leib0} slightly by setting
 $$
 \tilde{U}_{tr}(\epsilon): = U_{tr}(\epsilon) \setminus  \big( U_{bb}(\epsilon) \cup U_{\!cn}(\epsilon) \big),
 $$
  and noting that Leibniz rule actually gives
 \begin{equation} \label{leib}
 \supp \,  \partial \chi_{\epsilon}\subset \tilde{U}_{tr}(\epsilon) \cup U_{bb}(\epsilon) \cup U_{\!cn}(\epsilon). 
 \end{equation}

\subsection{Proof of Theorem \ref{mainthm2}} \label{proof of 2}
Let $q_0 \in  \supp(\pi_*\mu)  $, $\tau\sub{Y}>0$ be as in Lemma \ref{weight lemma}, and $q\sub{H} \in H$ be chosen so that
$\tau\sub{H}=d(q_0, q\sub{H})= d(K, H).$ We continue to let $ Y$ be  the geodesic sphere defined in \eqref{Sigma def} with $(y',y_n): \mc{U}\sub{Y}(\tau\sub{Y}) \to  \R^n$ geodesic normal coordinates adapted to $ Y$ as in \eqref{fermi c}.
For $\ep\sub{ Y}$ satisfying (\ref{constconditions})
let
$$
 \mc{W}:=\mc{W}\sub{ Y}(\tau\sub{H}, \ep\sub{Y}).
 $$

 After possibly shrinking $\ep\sub{Y}>0$ further, depending only on $(H,Y)$, there exists $k>2$ such
that for  $0<\ep<\ep\sub{ Y}$ (see Fig 1)
 \begin{equation}\label{Ubb is nice}
  \{ (y',y_n):\; \,\, |y_n- \tau\sub{H}|< \epsilon, \,\,  |y'|  \leq 4\epsilon \}
 \subset \mc{U}\sub{H}(k\epsilon).
\end{equation}

We also choose $c_0>0$ so that for $0<\ep<\ep\sub{Y}$ (see Fig 1), the {\em control ball}
  \begin{equation}\label{small ball}
 B(q_0, c_0\epsilon)\subset \{(y', y_n): \; |(y',y_n)| < \tfrac{1}{5}\epsilon\},
 \end{equation}
and continue to assume that \eqref{ep} holds; that is, $ \tau\sub{H} + 2 \ep\sub{Y} <\tau\sub{Y}$.

We now carry out the Carleman argument.  With $\ep$ as in \eqref{epsilon} and
  $\chi_{\ep} \in C^{\infty}_{0}(\mc{W})$ as defined in \eqref{cutoff}, set
\begin{equation}\label{v_h}
 v_h:= \chi_{\epsilon} e^{\psi/h} u_h,  \qquad \psi(y', y_n)=y_{n}  +  2\tau\sub{H} \rho_\ep(y'),
 \end{equation}
 with $\psi$ as in  \eqref{weight} with $\tau\sub{H}$ in place of $\tau$.
 By Lemma \ref{weight}, since $0<\tau\sub{H}<\tau\sub{Y},$
 $ \psi \in  C^{\infty}(\mc{W})$ is a Carleman weight. Thus,
   by the subelliptic Carleman estimates \cite[Theorem 7.5]{Zw}, there exists $C>0$ so that, with $P_\psi(h)$ as in \eqref{P},

\begin{equation} \label{carleman}
\| P_{\psi}(h) v_h \|^2\sub{L^2(\mc{W}) } \geq C h \, \| v_h \|\sub{L^2(\mc{W})}^2.
 \end{equation}\

Note that   $\chi_\ep=1$  on $B(q_0, c_0\ep)$  by \eqref{small ball} and $B(q_0, c_0\ep) \subset \mc{W}.$
In addition, since $\rho_\ep=0$ on $B(q_0, c_0\ep)$, we have 
\begin{equation} \label{rhs weight}
\psi(y) = y_n \geq -\tfrac{1}{2}\ep, \qquad (y',y_n) \in B(q_0, c_0\ep).
\end{equation}
  Also, since  $q_0 \in \supp\!(\pi_* \mu),$ it follows that  for all $r>0$ there is $C(r)>0$ such that
  $$ \pi_* \mu ( B(q_0,r) ) \geq C(r)>0.$$
  In particular, there exist constants $C(\epsilon)>0$ and $h_0(\epsilon)>0$ such that for $h \in (0,h_0(\epsilon)],$
  
  \begin{equation} \label{masslower}
   \int_{B(q_0,c_0\epsilon)} |u_h|^2 \, dv_g \geq C(\epsilon). 
   \end{equation} 
   Thus,  from   \eqref{v_h}, \eqref{rhs weight}  and \eqref{masslower} it follows  that there exist $C(\ep)>0$ and $h_0(\ep)>0$ such that
 \begin{eqnarray} \label{RHScontrol}
  \|v_h \|_{L^2(\mc{W})}^2 
  \geq e^{-\epsilon/h}  \int_{B(q_0,c_0\epsilon)} |u_h|^2 \, dv_g  \geq C(\epsilon)  e^{-\epsilon/h}, 
  \end{eqnarray}
  for $h \in (0,h_0(\epsilon)].$  Here, (\ref{RHScontrol}) gives the required lower bound for the RHS in (\ref{carleman}).

Next, since $P(h) u_h = 0,$ we will use that 
\begin{equation}\label{P-com}
P_{\psi}(h) v_h = e^{\psi/h} [ P(h), \chi_{\epsilon}] u_h.
\end{equation}
Also, since $[P(h), \chi_{\epsilon}]$ is an $h$-differential operator of order one supported in 
$
\supp \partial \chi_{\epsilon} \subset \tilde{U}_{tr}(\epsilon) \cup U_{bb}(\epsilon) \cup U_{\!cn}(\epsilon),
$
 where the inclusion was derived in \eqref{leib}. 
Thus,  from \eqref{carleman} and (\ref{RHScontrol}) it follows that, after possibly shrinking   $C(\ep),$
 \begin{eqnarray} \label{carleman4}
\| P_{\psi}(h) v_h \|^2_{L^2( U_{bb}(\epsilon) ) } +  \|P_{\psi}(h) v_h \|^2_{L^2( U_{\!cn}(\epsilon) ) }  
 + \| P_{\psi}(h) v_h \|^2_{ L^2(\tilde{U}_{tr}(\epsilon) )}    \geq C(\ep) h e^{-\epsilon/h}.
 \end{eqnarray} 
 
   We proceed to find upper bounds for each term in the LHS of \eqref{carleman4}. On the control set $ U_{\!cn}(\epsilon)$ we have that  $-2\epsilon < y_n < -\epsilon$ and so, since $\rho_\ep \leq 0,$ 

\begin{equation} \label{conweight}
 \psi(y) \leq y_n, \qquad y \in U_{\!cn}(\ep). \end{equation}

 From \eqref{P-com} and \eqref{conweight},  it follows by $L^2$-boundedness that there are constants  $\tilde C>0$  and $\tilde h_0>0$ such that 
\begin{equation}\label{control bound}
 \|P_{\psi}(h) v_h \|^2_{L^2( U_{\!cn}(\epsilon) ) }  \leq \| e^{y_n/h} [P(h),\chi_{\epsilon}] u_h\|^2_{L^2( U_{\!cn}(\epsilon))} \leq \tilde C h^2 e^{-2\epsilon/h},
\end{equation}
for all $0<h<\tilde h_0$.

On the transition set  $\tilde{U}_{tr}(\epsilon)$ we have $\rho_\ep(y') = -1$ and so from \eqref{epsilon} it follows that 
$$ 
\psi(y) = y_n + 2\tau\sub{H} \rho_\ep(y') = y_n - 2 \tau\sub{H}  \leq \tau\sub{H}  + \ep- 2\tau\sub{H}   \leq  - \tfrac{9}{10} \tau\sub{H} < - 9 \ep, \, \qquad y \in U_{tr}(\ep). $$

Therefore,   after possibly adjusting $\tilde C$ and $\tilde h_0$, and recalling (\ref{constconditions}) and \eqref{P-com},
\begin{equation} \label{transition bound}
 \|P_{\psi}(h) v_h \|^2_{L^2( U_{tr}(\epsilon) ) }  \leq \tilde C h^2 e^{-{18} \epsilon/h}, 
 \end{equation}
for all $0<h<\tilde h_0(\ep)$.

In view of \eqref{control bound} and  \eqref{transition bound},  both the transition and control terms on the LHS of \eqref{carleman4} can be absorbed into the RHS for $h>0$ small. The result is that there are constants $C(\ep)>0$ and  $h_0(\ep)>0$ such that for all $0<h<h_0(\ep)$
  \begin{eqnarray} \label{carleman5}
 \| e^{\psi/h} [ P(h), \chi_{\epsilon}] u_h \|^2_{ L^2( U_{bb}(\epsilon)  ) }=\| P_{\psi}(h) v_h \|^2_{L^2( U_{bb}(\epsilon) ) }   \geq C(\ep) h e^{-\epsilon/h}. 
 \end{eqnarray}
 
 Next, on the black-box set  ${U}_{bb}(\epsilon)$  we have $y_n<\tau\sub{H}+\ep$ and so,
 $$
 \psi(y) \leq \tau\sub{H}+\ep, \quad y \in {U}_{bb}(\epsilon),
 $$ 
 since $\rho_\ep \leq 0.$  So,  \eqref{carleman5} implies that
  \begin{equation}\label{almost}
\tilde{C}h^2\|u_h\|_{ L^2( U_{bb}(\epsilon)  ) } ^2 \geq  \| [ P(h), \chi_{\epsilon}] u_h \|^2_{ L^2( U_{bb}(\epsilon)  ) }    \geq Ch e^{-(2\tau\sub{H} +3\epsilon)/h},
  \end{equation}
for some $\tilde{C}>0$.
 
From  \eqref{Ubb is nice}, it is clear that for all $0<\ep<\ep\sub{Y}$ 
\begin{align*}
 U_{bb}(\epsilon)
  \subset \{(y',y_n):  \,\, |y_n- \tau\sub{H}|< \epsilon, \,\,  |y'|  \leq 3\epsilon \}
 \subset \mc{U}\sub{H}(k\epsilon).
\end{align*}
Then, by \eqref{almost}, there are $C(\ep)>0$ and  $h_0(\ep)>0$  such that
  $$
\|u_h\|_{ L^2 ( \mc{U}\sub{H}(k\epsilon) ) } ^2   \geq C(\ep) h^{-1} e^{-(2\tau\sub{H} +3\epsilon)/h}, \qquad 0 < h<h_0.
  $$

 Since by assumption  $k>2,$ we have $\tfrac{3}{2}\ep<k\ep$ and by setting $\tilde \ep=k\ep$, it follows that 
$\|u_h\|_{ L^2( \mc{U}\sub{H}( {\tilde\epsilon} ) )}    \geq Ch^{-{1}/{2}} e^{-(\tau\sub{H} +\tilde \ep)/h}$ for all $0<\tilde \ep< k\ep_0$ and $0<h<h_0$. The theorem then  follows since $\tau\sub{H}=d(q_0,H)$.
\qed\\

\section{Goodness estimates in lacunary regions: Proof of Theorem \ref{mainthm1}}\label{goodness}
In this section, we prove Theorem \ref{mainthm1}. Before carrying out the proof, we briefly {recall} some background material.




\subsection{Semiclassical pseudodifferential operators (h-pseudos)} \label{hpdo}

Let $U\subset M$ be open. We say that $a \in S^{m}_{h}(U)$  provided $a \sim h^{-m} (a_0 + h a_1 + \dots)$ in the sense that for all $\ell \geq 0$
\begin{equation}
\label{scsymbol} 
\begin{gathered} 
 a - h^{-m} \sum_{0 \leq j \leq \ell} h^{j} a_j \;\; \in h^{-m+ \ell+1} S^{0}(U),
 \end{gathered}
 \end{equation}
 where (see \cite[Section 14.2.2]{Zw})  
 $$
  S^{0}(U) =\Big \{ a \in C^{\infty}(T^*U):\;  \partial_{x}^\alpha \partial_{\xi}^{\beta} a(x,\xi) = O_{\alpha,\beta}(1) \; \text{for all }\, \alpha, \beta \in \mathbb N^n,\,\, (x,\xi) \in T^*U \Big\}.
  $$
Consider now the special case where $H \subset M$ is a closed hypersurface and $\Uhtau$ is an open Fermi tube about H of width $\tau>0.$ 
 In the following, we let $ x =(x',x_n): \Uhtau \to \R^n$ be Fermi coordinates centered on the hypersurface $H= \{x_n = 0 \}.$  We say that that $P(h)$ is an $h$-pseudodifferential operator ($h$-pseudo) on the tube  $\Uhtau$ if its kernel can be written in the form 
$$
P(x,y;h)=K_a(x,y;h)+R(x,y;h)
$$ 
where
\begin{equation}\label{kernel}
K_a(x,y;h)=\frac{1}{(2\pi h)^m}\int_{\R^n} e^{\frac{i}{h}\lan x-y,\xi\ran} \tchio(x_n) \, a(x,\xi;h) \, \tchit(y_n) d\xi,
\end{equation}
and for  all $\alpha,\beta  \in \mathbb N^n $,
$$
|\partial_x^\alpha \partial_y^\beta R(x,y)| = {\mathcal O}_{\alpha,\beta}(h^{\infty}).
$$
Here, $\tchio, \tchit \in C^{\infty}_0(\Uhtau)$ are  tubular cutoffs with $\tchio \Subset \tchit$ and $a\in S^{m}_{h}(\Uhtau)$.   As for the corresponding operator, we write $P(h) \in \Psi^{m}_{h}(\Uhtau).$

In the following it will also be useful to introduce two other cutoffs $\chio, \chit \in C^{\infty}_0(\Uhtau)$ with 
$\chit \Subset \tilde{\chi_2} \Subset \chio\Subset \tilde\chio.$  For concreteness, choosing $\ep \in (0, \ep\sub{Y})$ we assume that
\begin{itemize} 
\item $\tchio \in C^{\infty}_0([-2\ep,2\ep])$ with $\chio(x_n) = 1 $ when $|x_n| \leq \ep$, 
\item $\chio \in C^{\infty}_0([-\ep,\ep])$ with $\chio(x_n) = 1 $ when $|x_n| \leq \ep/2$, 
\item $\tchit \in C^{\infty}_0([-\ep/2,\ep/2])$ with $\tchio(x_n) = 1$ when $|x_n| \leq \ep/4,$
\item  $\chit \in C^{\infty}_0( (-\ep/4, \ep/4) )$ with $\chit(x_n) = 1$ when $|x_n| \leq \ep/8$.
\end{itemize}

 For  convenience, in the following we will use Fermi coordinates $x = (x',x_n)$, with $H=\{x_n=0\}$, 
 to represent the $h$-pseudo's without further comment. 
Given a symbol $a \in S^{m}_{h}(\Uhtau)$ we write  $a(x,hD_x)$ for the operator whose kernel is given by \eqref{kernel}.
In what follows, we will say that  $a \in S^{m}_h({\Uhtau})$ is  a \emph{tangential symbol} if $a=a(x,\xi')$ does not depend on the geodesic conormal variable $\xi_n$. In this case we also say that $a(x,hD_{x'})$ is a tangential operator.

For future reference, we also recall the following basic definition.
\begin{defn}\label{d:h elliptic}
We say that $A\in \Psi_{h}^{m}(\Uhtau)$ is \emph{$h$-elliptic} if there exists $C_0>0$ such that  the principal symbol $\sigma(A) := a_0$ satisfies
\begin{equation*}
|a_0(x,\xi)|\geq C_0 h^{-m}, \qquad (x,\xi)\in T^*(\Uhtau).
\end{equation*}
\end{defn}

For more detail on the calculus of  h-pseudos, we refer the reader to \cite{Zw,MarBook}.

\subsection{Operator factorization}\label{factorization}

In this section  we carry out a  factorization of an $h$-elliptic pseudo $Q(h) \in \Psi_h^0(\Uhtau)$  over a Fermi tube $\Uhtau$ in terms of the diffusion operator $hD_{x_n} - i B(x,hD_{x'})$, where  $B(x,\xi') \gtrapprox 1$ is a tangential symbol.

\begin{prop}\label{F and G}
 Let  $\tau \in (0, \tau\sub{H}]$,   $Q(h)\in \Psi_{h}^{0}(\Uhtau)$ be an $h$-elliptic operator  and  $\chio, \chit \in C^{\infty}_0(\Uhtau, [0,1])$  be the tubular cutoffs defined above with  $\chio \Subset \chit.$ 
Let $B \in S^{0}({\Uhtau})$ be a real valued tangential symbol and  $c_0>0$ such that   $B(x,\xi')  \geq  c_0$  when $(x,\xi) \in T^*\Uhtau$.

Then,   there exists an $h$-elliptic operator $ A(h) \in \Psi_{h}^{0}(\Uhtau)$ such that  
  \begin{equation} \label{OPfact}
\big\|\chit  \,   \Big( Q(h) - A(h) \, \big(  hD_{x_n}  - i B(x,hD_{x'}) \big) \,  \Big) \, \chio   \big\|\sub{L^2 \to L^2}
=  {{\mathcal O}(h^{\infty})}. 
\end{equation} 
\end{prop}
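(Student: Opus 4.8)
The plan is to construct $A(h)$ by a standard symbolic division argument in the $h$-pseudodifferential calculus, working entirely at the level of symbols in $S^0_h$ and then quantizing. The key algebraic fact is that, in Fermi coordinates $x=(x',x_n)$ with dual variables $\xi=(\xi',\xi_n)$, the symbol $\xi_n - iB(x,\xi')$ is an elliptic element of $S^1_h$ on $T^*\Uhtau$ near the relevant region once we multiply by $h$; more precisely, $h\xi_n - ihB(x,\xi')$ together with its obvious inverse $\big(h(\xi_n - iB(x,\xi'))\big)^{-1}$, which lies in $S^{-1}_h$ since $|\xi_n - iB(x,\xi')| \ge (|\xi_n|^2 + c_0^2)^{1/2} \gtrsim \langle \xi_n\rangle$ uniformly (here $B\ge c_0>0$ is exactly what prevents the division from blowing up, playing the role of ``ellipticity in the $\xi_n$ direction''). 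So I would first set $a_0 := q_0 \cdot (\xi_n - iB)^{-1}$ at the level of principal symbols, where $q_0 = \sigma(Q(h))$; because $Q(h)$ is $h$-elliptic, $|q_0| \ge C_0$, hence $a_0$ is bounded below and above (after the appropriate power of $h$), i.e. $A_0 := \Op_h(a_0)$ is $h$-elliptic in $\Psi^0_h(\Uhtau)$.

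Second, I would correct this leading choice by the usual iterative (Neumann-type) scheme. Writing $L(h) := hD_{x_n} - iB(x,hD_{x'})$, one has by the composition formula that $A_0(h) L(h)$ has the same principal symbol as $Q(h)$, so $Q(h) - A_0(h)L(h) \in \Psi^{-1}_h$ modulo the usual remainder considerations; then one peels off the next term by composing $L(h)$ on the right of a suitable $A_1(h) \in \Psi^{-1}_h$ chosen so that $\sigma_{-1}(A_1 L) = \sigma_{-1}(Q - A_0 L)$, which is possible precisely because $L$ is (semiclassically) elliptic in the sense above — division by its symbol costs nothing. Iterating produces $A_j(h) \in \Psi^{-j}_h$ with $Q(h) - \big(\sum_{j=0}^{N} A_j(h)\big) L(h) \in \Psi^{-N-1}_h$; taking an asymptotic (Borel) sum $A(h) \sim \sum_j A_j(h)$ in $\Psi^0_h(\Uhtau)$ gives an operator with $Q(h) - A(h) L(h) \in \Psi^{-\infty}_h$, i.e. with $\mathcal O(h^\infty)$ kernel, hence $\mathcal O(h^\infty)$ as an $L^2\to L^2$ map. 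The outer ellipticity of $A(h)$ is inherited from $A_0(h)$ since all later corrections are lower order.

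Third, I would deal with the cutoffs $\chio \Subset \chit$. The point of sandwiching between $\chit$ and $\chio$ is twofold: it makes all operators properly supported in the tube so that the composition calculus applies cleanly, and it localizes to a region where $B$ is genuinely defined and bounded below and where the Fermi coordinates are valid. Since $\chio \equiv 1$ on a neighborhood of $\supp\chit$ — wait, the hypothesis is $\chio\Subset\chit$, i.e. $\chit\equiv 1$ on $\supp\chio$ — one uses that inserting an extra cutoff equal to $1$ on the relevant support changes operators only by $\mathcal O(h^\infty)$ (pseudolocality), so that $\chit \big(Q - AL\big)\chio = \chit\, \tilde\chi\, \big(Q - AL\big)\,\tilde\chi'\,\chio + \mathcal O(h^\infty)$ for intermediate cutoffs, reducing \eqref{OPfact} to the statement that $\big(Q(h)-A(h)L(h)\big)$ composed with compactly supported cutoffs has $\mathcal O(h^\infty)$ norm, which is exactly what the remainder $\Psi^{-\infty}_h$ membership from the previous paragraph delivers. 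The various tube-cutoffs $\tchio, \tchit$ built into the definition of $h$-pseudos are absorbed in the same way.

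The main obstacle, and the only genuinely nonroutine point, is verifying that the factor $L(h) = hD_{x_n} - iB(x,hD_{x'})$ really is semiclassically elliptic \emph{uniformly down to $\xi_n = 0$}, so that division by its symbol stays in the bounded calculus; this is where $B(x,\xi')\ge c_0 > 0$ on all of $T^*\Uhtau$ is essential, and one must check that the lower bound $|\xi_n - iB(x,\xi')|^2 = \xi_n^2 + B(x,\xi')^2 \ge \xi_n^2 + c_0^2 \gtrsim \langle\xi_n\rangle^2$ combines correctly with the ellipticity estimate $|q_0|\ge C_0 h^0$ for the \emph{zeroth-order} operator $Q(h)$ so that the quotient symbol $q_0/(\xi_n - iB)$ and all its derivatives obey the $S^{-1}_h$ (indeed $S^0_h$ after accounting for the $h\xi_n$ scaling) bounds — including the symbol bounds in the $\xi_n$ variable, where one must confirm that differentiating the denominator does not destroy the decay. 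Once this uniform division estimate is in hand, everything else is the standard parametrix-style iteration, Borel summation, and cutoff bookkeeping.
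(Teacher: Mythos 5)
Your proposal is correct and follows essentially the same route as the paper: divide $q_0$ by the nonvanishing symbol $\xi_n - iB(x,\xi')$ (using $|\xi_n - iB|^2 = \xi_n^2 + B^2 \ge c_0^2$) to get the elliptic leading term $a_0$, then iteratively correct via the composition formula and asymptotically sum; the paper simply writes the recursion explicitly at the symbol level ($a_{-m}$ in terms of $\partial_\xi^\alpha a_{\ell-m}\,\partial_x^\alpha B$) rather than as an operator-level Neumann iteration, and note that the paper's symbol class $S^0(\Uhtau)$ requires only uniform boundedness of derivatives, so no $\langle\xi_n\rangle$-decay bookkeeping is actually needed for the quotient.
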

\begin{proof}
Let $q \sim \sum_{j=0}^{\infty} q_{-j} h^{j}$ be the symbol of $Q(h).$ 
First, note that
$$ 
|\xi _n - i  B(x,\xi')|^2 = \xi_n^2 + | B(x,\xi')|^2 \geq  c_0^2,  \quad (x,\xi)  \in T^*\Uhtau, 
$$
 and since by assumption $Q(h)$ is $h$-elliptic, we have that
%
%
$$a_0(x,\xi): = \frac{ q_0(x,\xi)}{ \xi_n - i   B(x,\xi') } \in S^{0}(T^*\Uhtau),\qquad  a_0(x,\xi) \geq C>0.$$
Our objective is to obtain an operator factorization   of the  form
$$
\chit Q(h) \chio = \chit A(x,hD_x)  \, \big( hD_{x_n} - i B(x,h D_x')  \big) \chio +   R(h),
$$
\begin{eqnarray} \label{opfactorization}
\|  R(h)  \|_{L^2\to L^2} = {\mathcal O}(h^{\infty}). 
 \end{eqnarray}

To achieve (\ref{opfactorization}), our ansatz is to use  the factorization at the level of principal symbols
\begin{eqnarray} \label{symbolbounds}
q_0(x,\xi)  = a_0(x,\xi)  ( \xi_n - i   B(x,\xi')), 
\end{eqnarray}
 to iteratively construct an $h$-smooth symbol $a \in S^{0}_h(T^*\Uhtau)$   satisfying  
\begin{equation}\label{def of A} 
q(x,\xi,h) \sim a(x,\xi,h)  \, \#  \Big( \xi_n - i  B(x,\xi') \Big), 
\end{equation}
where $q \sim \sum_{j=0}^{\infty} q_{-j} h^{j}$ is the total symbol of $Q(h).$ 
Since the elliptic symbol $\xi_n - i  B(x,\xi')$ is already in the desired form, we perturb the $a_0$-term only by adding lower order corrections in $h$ to match the total symbol of $q_0$. 
  The first term  $a_0$ already satisfies the desired equation in (\ref{def of A}):
  $$ q_0 = a_0 ( \xi_n - i  B ), \qquad a_0 \in {S^{0}(\Uhtau)}.$$

   For the second term $  a_{-1}$ one must solve an equation of the form 
  \begin{equation} \label{step0}
   q_{-1} = a_{-1} (\xi_n - i B ) + r_{-1}, \end{equation}
   where  $r_{-1} =- \partial_{\xi} a_0 \cdot \partial_{x} B:=-\sum_{j=1}^n\partial_{\xi_j} a_0  \, \partial_{x_j} B  \in S^{0}(\Uhtau)$. 
   Since $\xi_n - i  B \neq 0,$ we just solve for $a_{-1}$ in (\ref{step0}) and get
    \begin{equation} \label{step1}
 a_{-1} = (\xi_n - i  B)^{-1}  \, ( q_{-1} - r_{-1} ) = ( \xi_n  - i  B)^{-1} \, ( q_{-1} + \partial_{\xi} a_0    \partial_x  B ),
 \end{equation}
where we note that  $a_{-1} \in S^{0}(T^*\Uhtau).$ 
For the subsequent terms $a_{-m}$ with  $m \geq 2,$ we have 
\begin{equation} \label{step m}
a_{-m}:=  ( \xi_n - i  B)^{-1}{\Big( q_{-m} + i \sum_{1\leq \ell \leq m}  (-i)^\ell \sum_{|\alpha|=\ell}\frac{1}{\alpha !} ( \partial_{\xi}^{\alpha} a_{\ell-m} \, \partial_x^{\alpha}  B) } \Big). 
\end{equation}
It follows that  the decomposition (\ref{def of A}) holds for
$$
 a \sim \sum_{j=0}^{\infty} a_{-j} h^{j} \in   S_h^{0}(\Uhtau ). 
$$
Setting  $A(h):=a(x,hD_x)$   and noting $hD_{x_n} = Op_h(\xi_n)$,  it follows that the remainder $$ R(h):= \chit \,  \big( \,  Q(h)  - A(h) (hD_{x_n}  - i B(x, hD_{x'})  \, \big) \, \chio$$
satisfies $\| R(h) \|_{L^2 \to L^2} = O(h^{\infty}).$ 
 \end{proof}

\begin{rem} \label{lots of choices} Since $Q(h)$ is $h$-elliptic in the Fermi tube about $H$, the factorization in Proposition \ref{F and G} is by no means unique; indeed, one can factorize $Q$ over the Fermi tube in terms of {\em any} reference $h$-elliptic operator. As we will see in Proposition \ref{key prop}, the factorization corresponding to the specific choice of the reference  diffusion operator $hD_n - i B_0$, where $B_0>0$ is constant, is particularly convenient in converting the lower bound for $L^2$ eigenfunction mass $ \|u_h\|\sub{\mc{U}_H(\ep)}^2$  into an actual lower bound for the $L^2$ restrictions, $\|\gamma\sub{H} u_h\|_{L^2(H)}$  \end{rem}

\subsection{Exploiting the lacunary condition} \label{factor}


In this section, we explain how to combine Proposition \ref{F and G} with the lacunary condition on $Q(h)$ for the eigenfunction sequence $\{u_h\}$ to essentially allow us to work as if $(hD_{x_n}   - i B_0)u_h=O(e^{-C/h})$ on $\mc{U}\sub{H}(\ep)$, where $B_0$ is a positive constant.

In the following, it will be useful to define truncated cutoff functions. Given any cutoff $\chi \in C^{\infty}_0(\ep),$ we set
$$ \chi^{+}:= \chi \cdot {\bf 1}_{x_n \geq 0}, \qquad \mc{U}\sub{H}^+(\ep) := \{ x \in \mc{U}\sub{H}(\ep), \, x_n \geq 0 \}.$$

In  addition, given an open submanifold $\tilde{H} \subset H$ and  $\ep >0$ sufficiently small, we let $\psi(x') \in C^{\infty}_{0}(\tilde{H};[0,1]) $ with the property that there exists a proper open submanifold $\tilde{H}_\ep \subset \tilde{H}$ with {$\max_{x \in \tilde H}d(\tilde{H}_\ep, x) <\ep$} such that 
\begin{equation} \label{tangential}
 \psi |_{\tilde{H}_\ep}  =1.
  \end{equation}
  
Finally,  in the following,  $\gamma\sub{H}:M \to H$  denotes the restriction operator to $H$.

The proof of Theorem \ref{mainthm1} hinges on the following factorization result. \

\begin{prop} \label{key prop}  
 Let $\{u_h\}$ be a sequence of eigenfunctions satisfying \eqref{efn}, $H \subset M$  be a closed $C^\infty$-hypersurface, and suppose there exists an $H$-lacunary region for $\{u_h\}$ containing the Fermi tube $\mc{U}\sub{H}(2\ep)$.
Then, if  $B_0>0$ is any positive {\em constant}, $\tilde{H} \subset H$ is any open submanifold of $H$ and $\psi \in C^{\infty}_0(\tilde{H})$ is a tangential cutoff satisfying (\ref{tangential}), there exist operators $E(h):  C_0^{\infty}(\mc{U}\sub{H}(\ep)) \to C^{\infty}(\mc{U}\sub{H}^+(\ep))$ such that for some $C>0$,
\begin{equation}\label{bound}
 \big\|\chit^+  \,   \big(  hD_{x_n}   - i B_0 \big) \, \psi \big( I + E(h) \big) \, \chio  u_h  \big\|\sub{ L^2 } 
= O(e^{-C/h}),
\end{equation}
where 
$ \| \chit^+ E(h) \chio \|_{L^2 \to L^2} = O(h^\infty)$ and $\gamma\sub{H} E(h) = 0.$
\end{prop}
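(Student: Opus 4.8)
The plan is to start from Proposition~\ref{F and G} applied with the particular reference operator $hD_{x_n}-iB_0$, where $B_0>0$ is the given constant (so $B\equiv B_0$ is a trivially admissible tangential symbol with $c_0=B_0$). This produces an $h$-elliptic operator $A(h)\in\Psi_h^0(\mc{U}\sub{H}(\ep))$ with
\[
\big\|\chit\,\big(Q(h)-A(h)(hD_{x_n}-iB_0)\big)\chio\big\|\sub{L^2\to L^2}=O(h^\infty).
\]
Since $A(h)$ is $h$-elliptic on the Fermi tube, it has a microlocal left parametrix $L(h)\in\Psi_h^0$ with $L(h)A(h)=I+O(h^\infty)$ on the relevant conic region; then $L(h)\chit Q(h)\chio = (hD_{x_n}-iB_0)\chio + O(h^\infty)$ modulo commuting cutoffs past $A(h)$, up to controllable errors supported in the tube. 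Applying this to $u_h$ and invoking the lacunary hypothesis~\eqref{lacunary} (which controls $\|\chit Q(h)\chio u_h\|_{L^2}=O(e^{-C/h})$, and hence also $\|L(h)\chit Q(h)\chio u_h\|_{L^2}=O(e^{-C/h})$ by $L^2$-boundedness of $L(h)$) gives
\[
\big\|\chi\,(hD_{x_n}-iB_0)\,\chio u_h\big\|\sub{L^2(\mc{U}\sub{H}(\ep))}=O(e^{-C/h})
\]
for a suitable intermediate cutoff $\chi$ with $\chit\Subset\chi\Subset\chio$, after absorbing the $O(h^\infty)$ errors into the $O(e^{-C/h})$ bound.

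Next I would insert the tangential cutoff $\psi(x')$ and pass to the half-tube. Since $\psi$ is tangential it commutes with $hD_{x_n}$ and with multiplication by $\chio$, so commuting $\psi$ through $(hD_{x_n}-iB_0)$ costs nothing; the role of $E(h)$ is to absorb the commutators of the tubular cutoffs (i.e., the discrepancy between $\chi(hD_{x_n}-iB_0)\chio u_h$ and $(hD_{x_n}-iB_0)\psi\chio u_h$ on the half-tube) together with the $O(h^\infty)$ parametrix remainders. Concretely one sets $E(h)$ to be (a cutoff of) $L(h)\chit Q(h)\chio$ minus the exact factor, arranged so that $(I+E(h))$ absorbs these errors; restricting everything to $\{x_n\ge 0\}$ via the $+$-superscript cutoffs and choosing $E(h)$ to vanish on $H=\{x_n=0\}$ (by multiplying its Schwartz kernel by a factor that kills the $x_n=0$ fibre, which is consistent with $\gamma\sub{H}E(h)=0$) gives the stated form. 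The requirement $\max_{x\in\tilde H}d(\tilde H_\ep,x)<\ep$ and property~\eqref{tangential} ensure $\psi\equiv 1$ on a set large enough that the later integration argument sees the full black-box mass.

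The main obstacle I anticipate is bookkeeping of the cutoff supports and the microlocal parametrix: $A(h)$ is only $h$-elliptic, not globally invertible, so the left parametrix $L(h)$ exists only microlocally, and one must check that the symbol of $Q(h)$ (hence $A(h)$) is elliptic on all of $T^*\mc{U}\sub{H}(\ep)$ including the conormal direction $\xi_n$ — this is exactly where the $h$-ellipticity hypothesis on $Q(h)$ in Definition~\ref{weakdef}(i) is used, since $|\xi_n-iB_0|^2\ge B_0^2>0$ makes $a_0=q_0/(\xi_n-iB_0)$ elliptic everywhere. A secondary technical point is ensuring that commuting the cutoffs $\chio,\chit,\chit^+$ past the $h$-pseudos $A(h)$, $L(h)$ produces only $O(h^\infty)$ errors, which requires the nesting $\chit\Subset\tchit\Subset\chio\Subset\tchio$ set up in Subsection~\ref{hpdo} together with pseudolocality; the half-space cutoff $\mathbf 1_{x_n\ge 0}$ is not smooth, but it only multiplies functions (never gets differentiated by an $h$-pseudo in the final estimate) so it causes no trouble. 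Finally I would record that all $O(h^\infty)$ contributions are dominated by $O(e^{-C/h})$ after possibly shrinking $C$, yielding~\eqref{bound}.
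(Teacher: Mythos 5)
Your first half --- applying Proposition~\ref{F and G} with $B\equiv B_0$, inverting $A(h)$ by a local parametrix $L(h)$, and feeding in the lacunary bound $\|\chit Q(h)\chio u_h\|_{L^2}=O(e^{-C/h})$ --- matches the paper. But there is a genuine gap at the pivotal step. After the parametrix manipulations you do \emph{not} get $\|\chi(hD_{x_n}-iB_0)\chio u_h\|_{L^2}=O(e^{-C/h})$: the factorization and parametrix remainders are only $O(h^\infty)$ as operators, so applied to the $L^2$-normalized $u_h$ they contribute an error $R'(h)\chio u_h$ of size $O(h^\infty)$, which \emph{dominates} $O(e^{-C/h})$ rather than being absorbed by it. Your phrase ``after absorbing the $O(h^\infty)$ errors into the $O(e^{-C/h})$ bound'' has the inequality backwards, and this is not a cosmetic issue: the exponential smallness of the error is exactly what the integration argument in the proof of Theorem~\ref{mainthm1} needs (an $O(h^N)$ error there would swamp the lower bound $e^{-2(d+\ep)/h}$). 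The entire reason the proposition is stated with the corrector $I+E(h)$ is that the $O(h^\infty)$ remainder cannot be absorbed and must instead be cancelled exactly.

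The missing idea is the construction of $E(h)$ by variation of constants: one solves the ODE $(hD_{x_n}-iB_0)\,E(h)\chio=-R'(h)\chio$ on $\{0\le x_n\le\ep\}$ with zero data at $x_n=0$, via the Duhamel formula
\begin{equation*}
E(h)f(x',x_n)=-\frac{i}{h}\int_0^{x_n} e^{-(x_n-\tau)B_0/h}\,R'(h)f(x',\tau)\,d\tau .
\end{equation*}
This simultaneously yields the exact cancellation of the $O(h^\infty)$ term (so the only surviving error is the exponentially small lacunary one), the property $\gamma\sub{H}E(h)=0$ from the vanishing of the integral at $x_n=0$, and the bound $\|E(h)\|_{L^2\to L^2}=O(h^\infty)$ because $B_0>0$ makes the kernel $e^{-(x_n-\tau)B_0/h}$ bounded by $1$ for $\tau\le x_n$, which controls the prefactor $1/h$. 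Your alternative --- taking $E(h)$ to be ``$L(h)\chit Q(h)\chio$ minus the exact factor'' and then multiplying its Schwartz kernel by a factor killing the $x_n=0$ fibre --- does not work: such an ad hoc cutoff is differentiated by $hD_{x_n}$ in \eqref{bound} and generates new errors that are not exponentially small, and the resulting operator has no reason to satisfy $(hD_{x_n}-iB_0)E(h)=-R'(h)$, which is the identity that makes the remainder disappear.
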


\begin{proof} 

We apply Proposition \ref{F and G} with $B(x,\xi'):=B_0,$  so  that
$B(x',h D_{x'})=  B_0  \, I $ is simply the multiplication operator by  $B_0 >0.$

Since the operator $A(h) \in \Psi^0_{h}(\mc{U}\sub{H}(2\ep) )$ from  Proposition \ref{F and G} is $h$-elliptic over $\mc{U}\sub{H}(2\ep)$, there exists a local parametrix $L(h) \in {\Psi_h^{0}(\mc{U}\sub{H}(2\ep))}$ such that 
\begin{equation} \label{parametrix}
 \|   \chio \big( \, L(h) A(h) - I \, \big)  \chio \|_{L^2 \to L^2} =  O(h^{\infty}). 
\end{equation}
Since $Q(h)$ is lacunary for $u_h$, there exists $C>0$, depending only on $\{u_h\}$, such that 
$$
\| \chit Q(h) \chio u_h \|_{L^2} = O(e^{-C/h}).
$$
 Therefore, since $  \| \chio [L(h),\chit] \|_{L^2 \to {L^2}}  = O(h^{\infty}),$  it follows from Proposition \ref{F and G}  (\ref{parametrix})  that 
%
\begin{eqnarray} \label{KEY}
 \chit  \big(  h D_{x_n} - i B(x,hD_{x'}) \big) \chio u_h =  \chit R'(h) \chio u_h + O(e^{-C/h}), \end{eqnarray}
where
\begin{equation} \label{R'}
 \| \chit R'(h) \chio \|_{L^2 \to L^2} = O(h^{\infty}).
\end{equation}


It follows from  (\ref{KEY})  that
\begin{equation} \label{keyprop1}
\chit^+  \psi \big(  h D_{x_n} - i B_0 \big) \chio u_h =  \chit^+ \psi R'(h) \chio u_h + O(e^{-C/h}).
\end{equation}

Moreover, by variation of constants, with $E(h):  C_0^{\infty}(\mc{U}\sub{H}(\ep)) \to C^{\infty}(\mc{U}\sub{H}^+(\ep))$ given by
\begin{equation} \label{keyprop2}
E(h) f (x',x_n) = -\frac{i}{h} \int_{0}^{x_n} e^{- (x_n - \tau) B_0 /h}  \, R'(h) f (x',\tau) \, d\tau, \quad x_n \in [0,\ep],
\end{equation} 
we obtain that $\gamma\sub{H} E(h) = 0$ and
\begin{equation} \label{keyprop3}
 \big(  hD_{x_n} - i B_0 \big) E(h) \chio =  -R'(h)\chio, \quad x_n \in [0,\ep].
 \end{equation}

 Thus,  from \eqref{keyprop1} it follows that
 \begin{equation} \label{keyprop2}
\chit^+  \psi \big(  h D_{x_n} - i B_0 \big) \chio u_h = - \chit^+ \psi \big(  hD_{x_n} - i B_0 \big) E(h) \chio u_h + O(e^{-C/h}).
\end{equation}
Since
\begin{equation} \label{commutator}
{[ hD_{x_n}-iB_0, \psi] =0,}
\end{equation}
 the bound in \eqref{bound} follows from \eqref{keyprop2}.
Also, by \eqref{R'} and the fact that $B_0>0$,
$$ \| E(h) \|_{L^2(\mc{U}\sub{H}(\ep)) \to L^2(\mc{U}\sub{H}^+(\ep))} 
= O(h^\infty).$$
\end{proof}

\begin{rem} \label{key idea}
 We note that  the final crucial step in the proof of  Proposition  \ref{key prop} involves showing that the error term $R'(h)$ in \eqref{R'} can also be factorized as in \eqref{keyprop1}. Proposition \ref{key prop}  implies that $(  hD_{x_n}   - i B_0 \big) v_h= O(e^{-C/h})$  with $v_h= \psi (I + E(h)) \chio u_h$, where we note that $v_h=\psi u_h$ on $H$ since $\gamma\sub{H}E(h)=0$. We will also use that the $L^2$-mass of $v_h$ is comparable to that of $\psi u_h$ since  $ \| \chit^+ E(h) \chio \|_{L^{\infty} \to L^{\infty}} = O(h^\infty)$.  \end{rem}

\begin{rem} \label{localization}
 A key step in the proof of Proposition \ref{key prop} that allows us to localize the eigenfunction restriction bounds
 to an open submanifold $\tilde{H} \subset H$ involves the commutator condition $[ hD_{x_n}-iB(x,hD_{x'}), \psi] =0$ in (\ref{commutator}) where
  $\psi = \psi(x') \in C^{\infty}_0(\tilde{H})$ is a tangential cutoff  satisfying (\ref{tangential}). Since trivially $[hD_{x_n},\psi] =0,$ (\ref{commutator}) is equivalent to $[B(x,hD'), \psi] = 0$ and the latter requirement forces us to choose the tangential $h$-psdo  to be  a {\em constant} multiplication operator; that is, 
  $B(x,hD') = B_0$ with $B_0>0.$
 \end{rem}

\subsection{Proof of Theorem \ref{mainthm1}}  \label{proof}

Let $\tilde{H} \subset H$  be an open submanifold and  choose $q_0 \in K=\supp\!(\pi_* \mu)$  and  $q\sub{H} \in \tilde H$ so that
$$
0<d(q_0, q\sub{H})=d(K,\tilde H)<  \tau_0,
$$
where $\tau_0$ is as in Theorem \ref{mainthm2}.


In the following we  let $(x',x_n)$ be Fermi coordinates adapted to $H$,
$$
 H=\{x_n=0\},\qquad q\sub{H}=(0,0),
$$
and we assume they are well defined for $(x',x_n)\in \mc{U}\sub{H}(2 \ep).$

We continue to let $\chi_j \in C^{\infty}_0(\mc{U}\sub{H}(2\ep))$, for  $j=1,2, $ be the nested cutoff functions in Section \ref{factor}.
In general, for each $0\leq \tau < 2\ep$ we define the level hypersurface 
$$
H_{\tau}:= \{(x',x_n):\; x_n = \tau\}, \qquad H_0=H.
$$

We note that for $0<x_n<2\ep$  there is a natural diffeomorphism 
$\kappa_\tau: H \to H_{x_n}$ that in Fermi coordinates takes the form $\kappa_\tau (x') = (x',\tau).$
Consequently, using $\kappa_\tau$ to parametrize $H_{x_n}$ by $H$ together with the fact that $({\kappa_\tau})_*(d\sigma\sub{\!H})=d\sigma\sub{\!H_\tau}$, for every $v \in L^2(\mc{U}\sub{H}(2\ep))$
\begin{equation} \label{mon1}
 \| \gamma\sub{H_\tau} v   \|\sub{L^2(H_\tau)}^2 = \int_{H} |v(x',\tau)|^2 \, d\sigma\sub{H}(x') = \| \kappa_\tau^* \gamma\sub{H_\tau} v \|^2\sub{L^2(H)}. 
 \end{equation}

\begin{figure}
\caption{}
\includegraphics{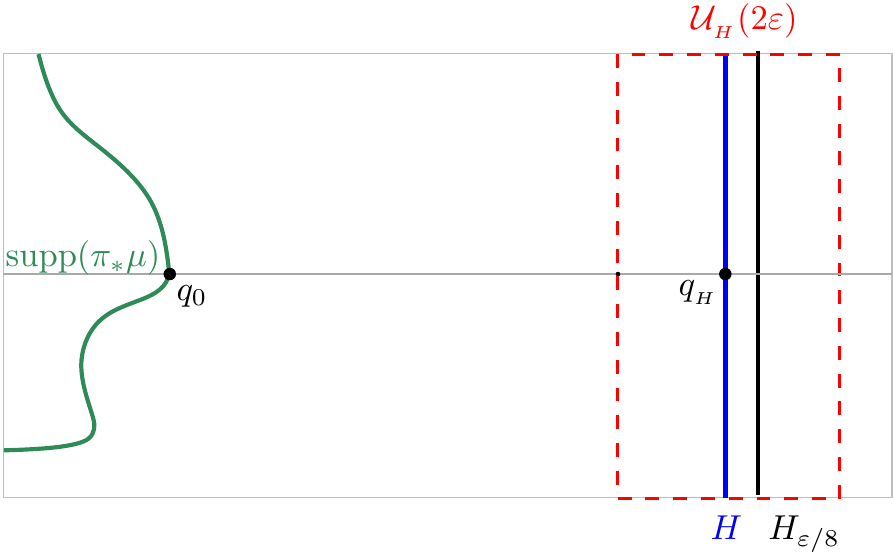}
\end{figure}

Let $\psi \in C^{\infty}_0(\tilde{H})$ be a tangential cutoff satisfying (\ref{tangential}) and $E(h):  C_0^{\infty}(\mc{U}\sub{H}(\ep)) \to C^{\infty}(\mc{U}\sub{H}^+(\ep))$ as in Proposition \ref{key prop}.
Set
$$ v_h:= \psi (I + E(h)) \chio u_h, \qquad  {on}\;\; \mc{U}\sub{H}^+(2\ep) \cap \text{supp} \, \psi,$$
where we note that since $\gamma\sub{H} \psi E(h) \chio u = \psi \gamma\sub{H} E(h) \chio u =0$ {and $\chio |_{H} =1$,}  it follows that 
$$\gamma\sub{H} v_h = \gamma\sub{H} \psi \chio u_h = \psi \gamma\sub{H} \chio u_h = \psi \gamma\sub{H} u_h.$$

Note that from Proposition \ref{key prop}, there is $C>0$ depending only on $\{u_h\}$ such that
$$
 \big\|\chit^+  \,   \big(  hD_{x_n}   - i B_0 \big) v_h  \big\|\sub{{L^2}} 
= O(e^{-C/h}).
$$
Since   $\chit^+ (x_n) = 1$  for $x_n \in [0, \frac{\ep}{8}]$, it follows that
\begin{equation} \label{evolution0}
\tfrac{1}{2}h \partial_{x_n} \int_{H} |v_h(x',x_n)|^2 d\sigma\sub{H}(x') = -  \int_{H} B_0 v_h(x',x_n) \overline{v_h(x',x_n)} \, d\sigma\sub{H}(x') + O(e^{-\tilde{C}/h}).
 \end{equation}
In view of (\ref{mon1}) one can rewrite (\ref{evolution0}) in the form
\begin{equation} \label{evolution}
\tfrac{1}{2}h \partial_{x_n} \, \| \gamma\sub{H_{x_n}} v_h \|_{L^2(H_{x_n})}^2 =  - B_0 \| \gamma\sub{H_{x_n}} v_h \|^2_{L^2(H_{x_n})} + O(e^{-\tilde{C}/h}), \qquad  x_n \in [0,\tfrac{\ep}{8}].
 \end{equation}

Integration of \eqref{evolution} over  $  0 \leq x_n  \leq \frac{\ep}{8}$ and multiplication by $-1$ gives
\begin{align} \label{monupshot0}
{h} \| \gamma\sub{H} v_h \|\sub{L^2(H) }^2 -   {h} \| \gamma\sub{H_{ \ep/8 } } v_h \|\sub{L^2(H_{\ep/8}) }^2  =  2 B_0  \big \|    v_h \big \|\sub{L^2(\mc{U}\sub{H}(\ep/8) )}^2  + O(e^{-\tilde{C}/h}), 
\end{align}
and consequently,
\begin{align} \label{monupshot1}
{h} \| \gamma\sub{H} v_h \|\sub{L^2(H) }^2    \geq  2 B_0  \big \|    v_h \big \|\sub{L^2(\mc{U}\sub{H}(\ep/8) )}^2  + O(e^{-\tilde{C}/h}). 
\end{align}
 
We also note that, by Proposition \ref{key prop}, we have $ \| \chit^+ E(h) \chio \|_{L^2 \to L^2} = O(h^\infty)$ and so,
$$ 
\big \|    v_h \big \|\sub{L^2(\mc{U}\sub{H}(\ep/8) )} =  ( 1 + O(h^{\infty}) ) \, \big \|   \psi  u_h \big \|\sub{L^2(\mc{U}\sub{H}(\ep/8) )} \geq   \tfrac{1}{2} \big \|   \psi  u_h \big \|\sub{L^2(\mc{U}\sub{H}(\ep/8) )}.
$$

Since $\psi |_{ \tilde{H}_\ep} =1$ on the open submanifold $\tilde{H}_{\ep} \subset \tilde{H}$ with {$\max_{x \in \tilde H}d(\tilde{H}_\ep, x) <\ep$ (see (\ref{tangential})),} it follows that
 \begin{equation} \label{lb}
\big \|    v_h \big \|\sub{L^2(\mc{U}\sub{H}(\ep/8) )}^2 \geq \tfrac{1}{4} \, \big \|    u_h \big \|\sub{L^2(\mc{U}\sub{\tilde H_\ep}(\ep/8) )}^2.
\end{equation}

 We next find a lower bound for the RHS of \eqref{lb} by applying  Theorem \ref{mainthm2}. Indeed, Theorem \ref{mainthm2} yields that for $\ep >0$ arbitrarily small, we  have

%
\begin{equation}\label{quc}
  \big \|    v_h \big \|\sub{L^2(\mc{U}\sub{H}(\ep/8) )}^2 
 \geq  \tfrac{1}{2} \, \big \|    u_h \big \|\sub{L^2(\mc{U}\sub{\tilde H_\ep}(\ep/8) )}^2
\geq C_\ep  e^{- { 2 \, ( d(\tilde{H_\ep},K) + \epsilon) /}{h}} \geq C_\ep e^{- { 2 \, ( d(\tilde{H},K) + 2 \epsilon) /}{h}}. 
 \end{equation} 
  
  In the last estimate in (\ref{quc}), we use (\ref{tangential}) and the fact that $\max_{x \in \tilde H}d(\tilde{H}_\ep, x) <\ep$ where $\ep>0$ is arbitrarily small but fixed independent of $h$.

Combining \eqref{monupshot1} and \eqref{quc}, and recalling that $\gamma\sub{H} v_h = \psi \gamma\sub{H} u_h$, implies that for any $\ep >0$ and $h \in (0,h_0(\ep)]$ there are constants $C_\epsilon >0$ and $C_{\epsilon}' >0$ such that

\begin{align} \label{monupshot5}
 h \| \psi \gamma\sub{H} u_h \|\sub{L^2(H) }^2  
  &\geq  C_\epsilon e^{ {  -2 ( d(\tilde{H},K) + 2 \epsilon)/}{h}}  - C_{\ep}' e^{-\tilde{C}/h}.
 \end{align}

To complete the proof of Theorem \ref{mainthm1}, we note that, since the second term on the RHS of (\ref{monupshot5}) depends only on the eigenfunction sequence (and not on ${\tilde{H}}$), it is clear that it can be absorbed in the first term provided one chooses $\tilde{H}$ sufficiently close to $K$, with $2  \, d(\tilde{H},K) < \tilde{C}.$ Thus, for such $\tilde{H}$ it follows from (\ref{monupshot5}), and the fact that $\psi \in C^{\infty}_0(\tilde{H})$, that
$$ h \int_{\tilde{H}} |u_h|^2 \, d\sigma\sub{\tilde H} \geq  C_\epsilon' e^{ {  -2 ( d(\tilde{H},K) + 2\epsilon)/}{h}}.$$
Since $\ep>0$ is arbitrarily small, this concludes the proof of  Theorem \ref{mainthm1}.
\qed

  \section{The case of Schr\"{o}dinger operators} \label{schrodinger}

Let $(M,g)$ be a compact $C^\infty$  Riemannian manifold,  $V \in C^{\infty}(M,\R)$.
Consider  the classical Schr\"{o}dinger operator 
$$
P(h) = -h^2 \Delta_g + V - E,
$$
 where  $E$ is a regular  value for $V.$ 
 In the classically forbidden region $\{ V > E \},$ the eigenfunctions $u_h$  satisfy the Agmon-Lithner estimates \cite{Zw}: for all $\delta>0$ there is $C(\delta)>0$ such that
 \begin{equation} \label{agmon}
  |u_h(x)| \leq C(\delta) e^{- [ d_E(x) - \delta ] /h}, \qquad  x \in \{ V >E \} ,
  \end{equation}
 where $d_E(x)$ is the distance from $x$ to $\{ V = E \}$ in the Agmon metric $g_E = (V-E)_{+} |d x|^2.$
  As a immediate consequence of (\ref{agmon}), it follows that  if $\mu$ is a defect measure associated to a sequence  $\{u_h\}$ of $L^2$-normalized  Schr\"odinger eigenfunctions, $P(h) u_h=0$, then its support is localized in the allowable region; that is,
\begin{equation}\label{allowed mu}
\supp\!(\pi_* \mu) \subset \{x\in M:\; V(x) \leq  E \}.
\end{equation}
We show that if $H$ lies inside the forbidden region $\{V>E\}$ but it is such that a Fermi neighborhood of it reaches the support $\supp\!(\pi_* \mu) $, then $H$ is a \emph{good} curve for $\{u_h\}$ in the sense of \eqref{Goodness}.

The proof of Theorem \ref{mainthm3} follows the same outline as in the homogeneous case in Theorem \ref{mainthm1}. Here, we explain the relatively minor changes required to prove the analogue of the Carleman estimates in Theorem \ref{mainthm2} and refer to the previous sections for further details.

\begin{theo} \label{mainthm4}
Let $(M,g)$ be a compact $C^\infty$ Riemannian manifold.
Let $\{u_h\}$ be a sequence of eigenfunctions satisfying \eqref{efnSchr} and let $\mu$ be a defect measure associated to it. 
Let $H \subset M$  be a $C^\infty$-hypersurface (possibly with boundary $\partial H$) and suppose there exist $q_0 \in \supp\!(\pi_* \mu)$  and $q\sub{H} \in H \backslash \partial H$ such that $d(q_0,q\sub{H})=d(q_0,H)$.

For all $ \beta > \max_{x \in M} | V(x) - E|^{1/2}$ there exists  $\tau_0>0$ such that if 
$$
0<d(q_0,H)< \tau_0,
$$
then   there are constants $C_0>0$ and $h_0>0$ such that 
$$ \| u_h \|\sub{L^2(\mathcal{U}\sub{H}^\epsilon)}  \geq C_0 e^{ - \beta d(q_0, H) /h}$$
for all $h \in (0,h_0].$ 
\end{theo}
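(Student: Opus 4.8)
The strategy is to mimic the proof of Theorem \ref{mainthm2} almost verbatim, replacing the homogeneous symbol $p(y,\xi)=|\xi|_{g(y)}^2-1$ with the Schr\"odinger symbol $p(y,\xi)=|\xi|_{g(y)}^2+V(y)-E$, and adjusting the Carleman weight so as to account for the extra potential term $V-E$ in the conjugated operator. First I would fix $q_0\in\supp(\pi_*\mu)$ and $q\sub{H}\in H\setminus\partial H$ with $d(q_0,q\sub{H})=d(q_0,H)$, and choose a reference geodesic sphere $Y$ centered at a point past $q\sub{H}$ on the minimizing geodesic $\gamma$ from $q_0$, exactly as in Subsection \ref{weight function}. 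Then I would introduce the \emph{modified} weight $\psi=\psi_{\ep,\tau}(y',y_n):=\beta\big(y_n+2\tau\rho_\ep(y')\big)$ for a parameter $\beta>\max_{x\in M}|V(x)-E|^{1/2}$ (equivalently, rescale the normal coordinate), and check that the analogue of Lemma \ref{weight lemma} still holds: with $p_\psi(y,\xi)=p(y,\xi+i\partial_y\psi)$ one computes $\Re p_\psi = \xi_n^2 - \beta^2 + a(y',\xi') + V(y) - E + O(\tau\sub{Y})$ and $\Im p_\psi = 2\beta\xi_n + O(\tau\sub{Y})$; on $\{p_\psi=0\}$ this forces $\xi_n=O(\tau\sub{Y})$ and $a(y',\xi') = \beta^2 - (V(y)-E) + O(\tau\sub{Y})$, which is bounded below by a positive constant precisely because $\beta^2 > \max|V-E|$. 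The H\"ormander–Poisson bracket condition $\{\Re p_\psi,\Im p_\psi\}>0$ on $\{p_\psi=0\}$ then again reduces to $4\beta\,b(y',\xi') + O(\tau\sub{Y}) > 0$, using strict convexity of $Y$ together with the lower bound on $a(y',\xi')$, for $\tau\sub{Y}$ small enough depending only on the principal curvatures of $Y$, on $V$, on $E$, and on $\beta$.

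Once the weight is in hand, I would repeat the region construction of Subsection \ref{regions} (control region near $Y$, transition regions, black-box region near $H$) with the identical cutoffs $\chi_{\ep,_Y},\chi_{\ep,_H},\chi_{\ep,tr}$ and the product cutoff $\chi_\ep$, set $v_h:=\chi_\ep e^{\psi/h}u_h$, and apply the subelliptic Carleman estimate \cite[Theorem 7.5]{Zw} to get $\|P_\psi(h)v_h\|_{L^2(\mc{W})}^2 \geq Ch\|v_h\|_{L^2(\mc{W})}^2$. The lower bound on the right-hand side comes exactly as in \eqref{RHScontrol}–\eqref{masslower}: on the control ball $B(q_0,c_0\ep)$ we have $\rho_\ep=0$, hence $\psi(y)=\beta y_n \geq -\tfrac{1}{2}\beta\ep$, and since $q_0\in\supp(\pi_*\mu)$ the mass $\int_{B(q_0,c_0\ep)}|u_h|^2\,dv_g$ is bounded below, giving $\|v_h\|_{L^2(\mc{W})}^2 \geq C(\ep)e^{-\beta\ep/h}$. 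For the upper bounds, using $P(h)u_h=0$ we write $P_\psi(h)v_h = e^{\psi/h}[P(h),\chi_\ep]u_h$ — note $[P(h),\chi_\ep]$ is still a first-order $h$-differential operator supported in $\supp\partial\chi_\ep$ because $V$ is a multiplication operator and drops out of the commutator — and we bound each of the control, transition, and black-box contributions by estimating $\psi$ on the corresponding support: on $U\sub{cn}(\ep)$ one has $\psi\leq\beta y_n < -\beta\ep$; on $\tilde U\sub{tr}(\ep)$ one has $\rho_\ep=-1$ so $\psi = \beta(y_n - 2\tau\sub{H}) \leq -\tfrac{9}{10}\beta\tau\sub{H}$, both dominated by the control term; and on $U_{bb}(\ep)$ one has $\psi \leq \beta(\tau\sub{H}+\ep)$ since $\rho_\ep\leq 0$. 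Absorbing the control and transition terms into the right-hand side leaves $\tilde C h^2\|u_h\|_{L^2(U_{bb}(\ep))}^2 \geq \|[P(h),\chi_\ep]u_h\|_{L^2(U_{bb}(\ep))}^2 \geq Ch\,e^{-(2\beta\tau\sub{H}+3\beta\ep)/h}$, and since $U_{bb}(\ep)\subset\mc{U}\sub{H}(k\ep)$ for some $k>2$, relabeling $\tilde\ep=k\ep$ gives $\|u_h\|_{L^2(\mc{U}\sub{H}(\tilde\ep))} \geq C h^{-1/2} e^{-\beta(\tau\sub{H}+\tilde\ep)/h}$, which is the claim with $d(q_0,H)=\tau\sub{H}$.

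\textbf{Main obstacle.} The only genuinely new point, and the step I would be most careful about, is verifying that the scaled weight $\psi=\beta(y_n+2\tau\rho_\ep(y'))$ is still a Carleman weight for the Schr\"odinger symbol in a neighborhood whose size is controlled \emph{uniformly} — that is, showing the analogue of Lemma \ref{weight lemma} with $\tau\sub{Y}$ depending only on the geometry of $Y$, the potential $V$, the energy $E$, and $\beta$, but \emph{not} on the particular point $q_0$ or on the eigenfunction sequence. The subtlety is that the condition $a(y',\xi') = \beta^2 - (V-E) + O(\tau\sub{Y})$ on the characteristic variety must keep $a$ bounded below by a fixed positive constant so that restricting the second fundamental form $b$ to this shell of covectors yields a uniform positive lower bound; this is exactly where the hypothesis $\beta > \max_{x\in M}|V(x)-E|^{1/2}$ (rather than, say, $\beta>\sqrt{(V-E)_+}$ pointwise) is used, since near $\{V=E\}$ the quantity $\beta^2-(V-E)$ could otherwise degenerate. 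Everything downstream — the region geometry, the Leibniz-rule support analysis, the absorption of the control and transition terms, the passage from the tubular mass bound to the conclusion — then carries over from Sections \ref{carleman} and \ref{goodness} with only the cosmetic change of inserting a factor $\beta$ into every exponential rate, and with the observation that the potential, being a bounded multiplication operator, neither enters the commutator $[P(h),\chi_\ep]$ nor affects the $L^2$-boundedness estimates.
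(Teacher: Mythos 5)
Your overall route is the same as the paper's: conjugate by a weight of the form $\beta y_n + (\text{tangential part})$, rerun the region decomposition and the subelliptic Carleman estimate, and observe that $V$ drops out of the commutator $[P(h),\chi_\ep]$. You also correctly locate where $\beta^2>\max|V-E|$ enters, namely in keeping $a(y',\xi')$ bounded below on $\{p_\psi=0\}$ so that the second fundamental form $b$ restricted to that shell of covectors has a uniform positive lower bound. However, there is a genuine gap in your verification of the Carleman weight condition: the Poisson bracket does \emph{not} reduce to $4\beta\, b(y',\xi')+O(\tau\sub{Y})$. Since $\Im p_\psi=2\beta\xi_n+O(\tau_0)$ and $\partial_{y_n}\Re p_\psi$ now contains the term $\partial_{y_n}V(y)$, the correct computation on $\{p_\psi=0\}$ is
\begin{equation*}
\{\Re p_\psi,\Im p_\psi\}=4\beta\, b(y',\xi')-2\beta\,\partial_{y_n}V(y)+O(\tau_0),
\end{equation*}
and the term $-2\beta\,\partial_{y_n}V(y)$ is of order one: it does not shrink with the collar width $\tau\sub{Y}$, so "taking $\tau\sub{Y}$ small enough" cannot absorb it. If $|\nabla V|$ is large compared with the principal curvatures of $Y$ on the shell $\{a\geq\tfrac12\delta_0\}$ (with $\delta_0=\beta^2-\max|V-E|$), positivity of the bracket can fail for the sphere $Y=\partial B(q,r_{q_0})$ that you, following the homogeneous case, propose to use.

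The missing idea is that the \emph{radius} of the reference geodesic sphere must itself be shrunk: since the principal curvatures of $\partial B(q,r)$ blow up as $r\to 0^+$, one first fixes $r_0>0$ so that $2b\sub{\,Y_{q,r}}(y',\xi')>\max_{y\in M}|\nabla V(y)|$ uniformly over all $q$, all $r\leq r_0$, and all covectors with $a\sub{\,Y_{q,r}}(y',\xi')\geq\tfrac12\delta_0$, and then works with $Y=\partial B(q,s_{q_0})$ where $s_{q_0}=\min\{r_{q_0},r_0\}$. With that choice the bracket satisfies $\{\Re p_\psi,\Im p_\psi\}\geq 2\beta\bigl(2b-|\nabla V|\bigr)+O(\tau_0)>0$ for $\tau_0$ small, and the rest of your argument (regions, absorption of control and transition terms, the factor $\beta$ in every exponential rate) goes through as you describe. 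Without this adjustment the proof is incomplete, since the Carleman inequality you invoke requires the H\"ormander bracket condition to hold on the whole characteristic variety.
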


\begin{proof}

To prove the Carleman analogue of Theorem \ref{mainthm2}, we need to adapt the argument slightly by constructing a modified weight function.  
Let  
$$ \beta^2 > \max_{y \in M} | V(y) - E |,  \qquad \delta_0:= \beta^2 - \max_{y \in M} | V(y) - E |.
$$

Given a geodesic sphere $Y_{q,r}:=\partial B(q, r)$, let $a\sub{\,Y_{q,r}}$ be the quadratic form dual to the induced metric on $Y_{q,r}$ and $b\sub{\,Y_{q,r}}$ be the quadratic form dual to the second fundamental form for $Y_{q,r}$. In particular,  for all $y' \in Y_{q,r}$, the eigenvalues of $b\sub{\,Y_{q,r}}(y')$ with respect to $a\sub{\,Y_{q,r}}(y')$ are the principal curvatures of $Y_{q,r}$ and are are strictly positive. Since the principal curvature of $Y_{q,r}$ grows to infinity as $r\to 0^+$, there exists  $r_0>0$ such that 
\begin{equation}\label{e:curv}
\min_{q\in M}\min_{r\leq r_0}  \min_{y' \in Y_{q,r}} \{2b\sub{\,Y_{q,r}}(y', \xi'):  \; a\sub{\,Y_{q,r}}(y',\xi') \geq \tfrac{1}{2}\delta_0\}> \max_{y\in M}|\nabla V(y)|.
\end{equation}
Condition \eqref{e:curv} implies that the principal curvatures of any geodesic sphere $Y_{q,r}$ with  $0<r\leq r_0$ are bounded below by $\tfrac{1}{2}\max_{y\in M}|\nabla V(y)|$. This will be used to prove that $\psi$ as defined  below is a Carleman weight.

Next, let $q_0 \in \supp\!(\pi_* \mu)$. Let $r_{q_0}$  be the maximal radius  for which the exponential map $\exp_{q_0}$ is a diffeomorphism on $B(q_0, 2r_{q_0})$. Let $s_{q_0}=\min\{r_{q_0}, r_0\}$ and work with $\tau_0<\tfrac{1}{4}s_{q_0}$ to be chosen later.
Then,    let $H \subset M$  be a $C^\infty$-hypersurface and   $q\sub{H} \in H \backslash \partial H$ such that
\begin{equation}\label{distance2}
\tau\sub{H}:=d(q_0, q\sub{H})=d(q_0,H), \qquad 0<\tau\sub{H} < \tau_0.
\end{equation}

Let $\gamma$ be a unit speed geodesic joining $q_0=\gamma(0)$ with $q\sub{H}=\gamma(\tau\sub{H})$.  Let $q=\gamma(s_{q_0})$  and set 
$
 Y:=\partial B(q, s_{q_0}).
$
As before, we work with $(y', y_n)$ being geodesic normal coordinates adapted to $Y$, in which
$
 Y=\{y_n=0\},
 $
 and
 $ 
 q_0=(0,0),
$
 and with $\{y_n>0\}$ corresponding to points in the interior of $B(q, s_{q_0})$.  
 
 Note that these coordinates are well defined for $|y_n|<2\tau\sub{Y}:=s_{q_0}$ and $|y'|<c\sub{Y}$ for some $c\sub{Y}>0$, since $\exp_q$ is a diffeomorphism on $B(q, 2r_{q_0})\supset B(q, 2s_{q_0})$. Furthermore, since $\tau_0<\tfrac{1}{4}s_{q_0}$, by \eqref{distance2} we also have $\tau\sub{H}<\tau_0 <\tau\sub{Y}$.
In particular,  with $\ep\sub{Y}$ as in \eqref{constconditions}, the Fermi coordinates with respect to $Y$ are well defined on  $\mc{W}\sub{ Y}(\tau\sub{H}, \ep\sub{Y})$ as in \eqref{W}.

In analogy with \eqref{weight} and \eqref{v_h}, for $\epsilon >0$  we set
\begin{equation} \label{Sweight}
\psi(y',y_n) :=  \beta y_n +  2 \tau\sub{H} \rho_{\ep}(y'). 
\end{equation}
Note that, as in \eqref{ppal-symbol},
\begin{equation}\label{ppal-symbol-schrodinger}
 p(y, \xi)=\xi_n^2+a\sub{\,Y}(y', \xi') -2y_n b\sub{\,Y}(y', \xi') +  R(y, \xi') + V(y) - E,
 \end{equation}
with $R(y, \xi')=O(y_n^2 |\xi'|^2)$.
Next, note that  provided $|y_n| \leq \tau_0,$
 \begin{align} \label{sch1}
& \Re p_{\psi} = \xi_n^2 + a\sub{\,Y}(y',\xi') - 2 y_n b\sub{\,Y}(y',\xi') + V(y) - E  - \beta^2 + O(\tau_0^2 |\xi'|^2), \nonumber \\
& \Im p_{\psi} = 2 \beta \xi_n + O(\tau_0 |\xi'|). \hspace{2.8in}
 \end{align}
Therefore, 
\begin{align*}
 \{p_{\psi}=0\}
\!\!=\!\!  \big\{ (y,\xi) \in T^* (\mc{W}\sub{ Y}(\tau\sub{H}, \ep\sub{Y})):  a\sub{\,Y}(y', \xi') = \beta^2+E-\!V(y)+ O(\tau_0), \, \xi_n = O(\tau_0)\big\},
\end{align*}
and so
\begin{align} \label{comm}
\{ \Re p_{\psi}, \Im p_{\psi} \} (y,\xi) 
&=  4 {\beta}b\sub{\,Y}(y', \xi') {-2\beta}\partial_{y_n}V(y)  + O(\tau_0), \quad  (y,\xi) \in \{p_{\psi}=0\}.
\end{align}
 
 Next, let $\tau_0$ be small enough so that on  $\{p_{\psi}=0\}$ we have $a\sub{\,Y}(y', \xi')  \geq \tfrac{1}{2}\delta_0$. Then,  the lower bound in  \eqref{e:curv} together with \eqref{comm}
yield
 $\{ \Re p_{\psi}, \Im p_{\psi} \} (y,\xi) >0$ on  $\{p_{\psi}=0\}$.
 This shows that $\psi$ is a Carleman weight on $\mc{W}\sub{ Y}(\tau\sub{H}, \ep\sub{Y})$.
 
One then proceeds exactly as in the proof of Theorem \ref{mainthm2} to show that
\begin{equation} \label{carleman/schrodinger}
\| u_h \|_{L^2(\mathcal{U}_H(\ep))} \geq C(\ep) e^{- ( \beta d(q_0,H) + \ep)/h}. 
\end{equation}

 \end{proof}
The proof of Theorem \ref{mainthm4} then follows exactly as for Theorem \ref{mainthm2} after noting the following.
Let $\{u_h\}$ be a sequence of $L^2$-normalized eigenfunctions of a Schr\"odinger operator $P(h) = - h^2 \Delta_g + V-E $.
Choose $0<\tau_1<\tau_0$ such that  $$\Uho\subset \{x\in M:\; V(x) > E \}.$$
We recall that in this case, by \eqref{allowed mu},  $K=\supp(\pi_*\mu) \subset \{ V \leq E \}.$
Since $P(h)$ is elliptic on $\Uho$, it has a left parametrix $L(h)$. Thus,
$$
Q(h):=L(h)P(h) \in \Psi^0_h(\Uho)
$$ is  $h$-elliptic over the set $\{ V >E \}$ and  $Q(h) u_h = 0$ {since the $u_h$ are eigenfunctions.}  We conclude from Remark \ref{differential} that $Q(h)$ is a lacunary operator for $\{u_h\}$.

\section{Examples} \label{examples}
In this section we present several examples to which our results apply.

\subsection{Warped products}
Let $(M,g\sub{M})$ and $(N, g\sub{N})$ be two compact $C^\infty$ Riemannian manifolds.  We work on the warp product manifold $M \times_{f} N$ endowed with the metric $g=g\sub{M} \oplus f^2 g\sub{N}$, for some function $f \in C^\infty(M, \R\backslash\{0\})$. 

Let $\{\varphi_h\}_h  \in C^\infty(N)$ be a sequence of normalized eigenfunctions
\begin{equation}\label{efn N}
-h^2 \Delta_{g\sub{N}}\varphi_{h}=\varphi_{h}, \qquad \|\varphi_{h}\|\sub{L^2(N)}=1,
\end{equation}
and for each $\varphi_h$ consider the subspace 
$$
\mathcal F_h = \{  v\otimes  \varphi_{h} : \; v \in L^2(M)\} \subset L^2(M \times_{f} N).
$$
Since $g=g\sub{M} \oplus f^2 g\sub{N}$, with  $V:=f^{-2}  >0$ we have 
$$
-h^2\Delta_g=-h^2 \Delta_{g_M}  - V h^2 \Delta_{g\sub{N}} + hL(h),
$$
 where 
 $$
W(h)=-{n }f^{-1}\,h\nabla_{g_M}  f, \qquad \qquad n=\dim N.
 $$ 
 Note that $W(h)$  is a first order  differential operator on   $L^2(M \times N)$ which acts by differentiating in the $M$ variables only.
 
In particular,  $\mathcal F_h  \subset L^2(M \times_{f} N)$ is invariant under $-h^2\Delta_g$ and 
$$
P(h) :=-h^2\Delta_g \big|_{\mathcal F_h }= -h^2 \Delta_{g_M}  +V + hW(h).
$$
Using that  $-h^2\Delta_g $ is self-adjoint on $L^2(M \times_{f} N)$, it is immediate to see that  $P(h)$ is self-adjoint when viewed as an operator acting on $(M, \langle \cdot \,,\, \cdot \rangle_{\tilde g_M})$ where $\langle  v_1 , v_2 \rangle_{\tilde g_M}=\int_M   v_1 \,\overline{v_2} \, f^n dv_{g_M}$.

\begin{lem}
Let $E$ be a regular value for $V$ and let  $\{v_h\}$ be a sequence of eigenfunctions,
$(P(h)-E)v_h=0$, with defect measure $\mu$. Let $u_h=v_h\otimes  \varphi_{h}$ be the sequence of eigenfunctions $(-h^2\Delta_g -E)u_h=0$ with $\varphi_h$ as in \eqref{efn N}.
 
Let $H \subset \{x\in M: \,V(x)>E\}$ be a closed $C^\infty$ hypersurface. Then, there exists  $\tau_0>0$ such that the following holds. If there exists  $q_0 \in \supp\!(\pi_* \mu)$  with
$
0<d(q_0,H)< \tau_0,
$
then  for all $\ep >0$  there are $C_0(\ep)>0$ and $h_0(\ep)>0$ such that 
$$ \| u_h \|\sub{L^2(H \times N)} \geq C_0 e^{ -(d(q_0, H)+ \ep)/h},$$
for all  $h \in (0,h_0(\ep)].$
\end{lem}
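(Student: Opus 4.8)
The plan is to deduce the lemma from Theorem~\ref{mainthm1}, applied not on $M$ but on the warped product $\widetilde M:=M\times_f N$ itself, taking as hypersurface $\widetilde H:=H\times N$ and as eigenfunction sequence $\{u_h\}=\{v_h\otimes\varphi_h\}$. First I would record that $u_h$ is an honest $L^2(\widetilde M,dv_g)$-normalized Laplace eigenfunction on $\widetilde M$: since $dv_g=f^n\,dv_{g_M}\,dv_{g_N}$ one has $\|u_h\|\sub{L^2(\widetilde M)}^2=\|v_h\|\sub{\widetilde g_M}^2\,\|\varphi_h\|\sub{L^2(N)}^2=1$, and, using $-h^2\Delta_{g_N}\varphi_h=\varphi_h$ and $P(h)=-h^2\Delta_g|_{\mathcal F_h}$, one computes $-h^2\Delta_g u_h=(P(h)v_h)\otimes\varphi_h=E\,u_h$, with $E>0$ (forced by $\supp(\pi_*\mu)\subseteq\{V\le E\}$ and $V>0$); hence $u_h$ solves \eqref{efn} on $\widetilde M$ after the routine rescaling $h\mapsto h/\sqrt E$. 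Passing to a subsequence, I let $\widetilde\mu$ be an associated defect measure and $\widetilde K:=\supp(\pi_*\widetilde\mu)$.

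Next I would verify the three hypotheses of Theorem~\ref{mainthm1} on $\widetilde M$. \emph{Localization:} testing the defect measure against symbols $a(x,\xi)$ pulled back from $T^*M$ (using the product structure and $\|\varphi_h\|\sub{L^2(N)}=1$) shows that $\widetilde\mu$ reproduces $\mu$ in the $M$-directions, so, $\widetilde K$ being compact, its image in $M$ equals $K:=\supp(\pi_*\mu)$ and $\widetilde K\subseteq K\times N$. The Agmon-Lithner estimate \eqref{agmon} persists for $v_h$ (the term $hW(h)$ being a negligible perturbation), so \eqref{allowed mu} gives $K\subseteq\{V\le E\}$, whence $\widetilde K\subseteq\{V\le E\}\times N$, while $\widetilde H=H\times N\subseteq\{V>E\}\times N$ is a nonempty closed $C^\infty$-hypersurface in $\widetilde M$; therefore $\widetilde H\subseteq\widetilde K^c$, $\widetilde K\ne\widetilde M$, and $d\sub{\widetilde M}(\widetilde H,\widetilde K)>0$ by compactness. \emph{Lacunary region:} for $\tau$ small, $\mc{U}\sub{H}(\tau)\subseteq\{V\ge E+c_1\}$ for some $c_1>0$, so \eqref{agmon} yields $\|v_h\|\sub{L^\infty(\mc{U}\sub{H}(\tau))}=O(e^{-c/h})$ with $c>0$ depending only on $\{v_h\}$; since the Fermi tube of $\widetilde H$ is $\mc{U}\sub{\widetilde H}(\tau)=\mc{U}\sub{H}(\tau)\times N$ (the slices $M\times\{y\}$ being totally geodesic in $\widetilde M$), we get $\|u_h\|\sub{L^2(\mc{U}\sub{\widetilde H}(\tau))}=\|v_h\|\sub{L^2(\mc{U}\sub{H}(\tau),\,\widetilde g_M)}=O(e^{-c/h})$, so $\mc{U}\sub{\widetilde H}(\tau)$ is an $\widetilde H$-lacunary region for $\{u_h\}$ (one may take $Q(h)=I$). \emph{Distance:} choosing $y_0\in N$ with $(q_0,y_0)\in\widetilde K$ (possible since the image of $\widetilde K$ in $M$ is $K\ni q_0$) and a $g_M$-minimizing geodesic $\gamma$ from $q_0$ to a nearest point $q\sub{H}\in H$, the curve $t\mapsto(\gamma(t),y_0)$ has $g$-length $d(q_0,q\sub{H})=d(q_0,H)$, so $d\sub{\widetilde M}(\widetilde H,\widetilde K)\le d(q_0,H)$.

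Taking $\tau_0$ no larger than the threshold furnished by Theorem~\ref{mainthm1} on $\widetilde M$, the hypothesis $0<d\sub{\widetilde M}(\widetilde H,\widetilde K)\le d(q_0,H)<\tau_0$ lets Theorem~\ref{mainthm1} apply and gives, for each $\ep>0$, constants $C_0(\ep),h_0(\ep)>0$ with
$$\|u_h\|\sub{L^2(\widetilde H,\,d\sigma\sub{\widetilde H})}\ge C_0(\ep)\,e^{-(d\sub{\widetilde M}(\widetilde H,\widetilde K)+\ep)/h}\ge C_0(\ep)\,e^{-(d(q_0,H)+\ep)/h},\qquad h\in(0,h_0(\ep)].$$
Since the measure induced by $g$ on $\widetilde H=H\times N$ is $f^n\,d\sigma\sub{H}\otimes dv_{g_N}$, one has $\|u_h\|\sub{L^2(\widetilde H)}^2=\int_H|v_h|^2 f^n\,d\sigma\sub{H}=\|u_h\|\sub{L^2(H\times N)}^2$, which is precisely the quantity in the statement; this finishes the proof.

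The step I expect to be the main obstacle is the microlocal bookkeeping in the second paragraph: identifying the $M$-shadow of the defect measure of $u_h=v_h\otimes\varphi_h$ with $K$ (which comes down to testing against symbols pulled back from $T^*M$ and recognizing the defect measure of $v_h$ in the weighted inner product), and confirming the product structure $\mc{U}\sub{\widetilde H}(\tau)=\mc{U}\sub{H}(\tau)\times N$ of the Fermi tube so that the Agmon decay of $v_h$ transfers to $\widetilde H$. The eigenvalue normalization $E\ne1$ enters only through the rescaling $h\mapsto h/\sqrt E$; it amounts to reading the distance in the exponent with respect to the metric $Eg$, which is the standard adjustment between the homogeneous and Helmholtz settings.
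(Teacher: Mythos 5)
Your proposal is essentially correct, but it takes a genuinely different route from the paper's. The paper stays downstairs on $M$: it sets $Q(h)=P(h)^{-1}(P(h)-E)\in\Psi_h^0(M)$, notes that its principal symbol $(\xi_n^2+r(x,\xi')+V)^{-1}(\xi_n^2+r(x,\xi')+V-E)$ is bounded away from zero over a Fermi tube of $H$ precisely because $V>E$ there, so that $Q(h)$ is $h$-elliptic and annihilates $v_h$ and is therefore lacunary for $\{v_h\}$ by Remark \ref{differential}; it then applies the Schr\"odinger version, Theorem \ref{mainthm3}, to $\{v_h\}$ and converts $\|v_h\|_{L^2(H)}$ into $\|u_h\|_{L^2(H\times N)}$ using $f\geq C>0$. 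You instead lift everything to $M\times_f N$ and invoke the Laplace version, Theorem \ref{mainthm1}. This works, but it forces on you exactly the overhead you flag: since $-h^2\Delta_g-E$ is not semiclassically elliptic over any point of the product, you cannot reuse the paper's lacunary operator upstairs and must manufacture one ($Q=I$) from Agmon decay of $v_h$ — which in turn requires justifying the Agmon--Lithner estimate for the perturbed operator $-h^2\Delta_{g_M}+V+hW(h)$ and a uniform positive lower bound for the Agmon distance on the tube (both true, but asserted rather than proved) — and you need the defect-measure and Fermi-tube bookkeeping on the product, which does go through because the leaves $M\times\{y\}$ are totally geodesic and the fiberwise pushforward of $\widetilde\mu$ recovers $\mu$. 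A minor further point: Theorem \ref{mainthm1} is stated for a sequence \emph{with} a defect measure, and $\{u_h\}$ is only guaranteed one along subsequences, so you should close the argument with the standard contradiction/subsequence extraction; this is harmless since every such $\widetilde\mu$ projects to the given $\mu$.

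The one quantitative discrepancy worth recording is the constant in the exponent. Your rescaling $h\mapsto h/\sqrt{E}$ delivers $\|u_h\|_{L^2(H\times N)}\geq C_0(\ep)\,e^{-\sqrt{E}\,(d(q_0,H)+\ep)/h}$, which is weaker than the stated bound whenever $E>1$. The paper's own route is not immune to this: Theorem \ref{mainthm3} produces $e^{-\beta(d+\ep)/h}$ with $\beta>\max_M|V-E|^{1/2}$, which also does not literally match the lemma as written. But the two constants are different and incomparable in general, so you should state explicitly which exponent your argument yields rather than absorbing it into ``the routine rescaling.''
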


\begin{proof}
The operator $Q(h)=P(h)^{-1}(P(h)-E)  \in \Psi_h^0(M)$ acts on $M$ and
\begin{equation}\label{e:zero}
Q(h)v_h=0.
\end{equation}

Let $(x',x_n)$ be Fermi coordinates on 
$M$ adapted to $H=\{x_n=0\}$. Then,
$Q(h)=Op_h(q)$ with 
$$
q_0(x,\xi)=(\xi_n^2+r(x, \xi') +V(x))^{-1}(\xi_n^2+r(x, \xi') +V(x)-E).
$$
It follows that $Q(h)$ is $h$-elliptic, and hence Remark \ref{differential} and \eqref{e:zero} yield that $Q(h)$ is a lacunary operator for $\{v_h\}$ in a Fermi neighborhood of $H \subset \{x\in M: \,V(x)>E\}$.
The result then follows from Theorem \ref{mainthm3} and the fact that since $M$ is compact there exists $C>0$ such that $f\geq C$ and so 
$\| u_h \|\sub{L^2(H \times N)} \geq  C \|v_h\|\sub{L^2(H)}$.
\end{proof}

\subsection{Eigenfunctions of quantum completely integrable (QCI) systems}

 Let $(M,g)$ be a compact $C^\infty$  Riemannian manifold of dimension $n$ and
 let $\{P_{j}(h)\}_{ j =1}^n$ be a  QCI system of $n$ real-smooth, self-adjoint $h$-partial differential operators with 
 $$
 [P_i(h), P_j(h)] =0, \qquad  i \neq j,
 $$
  and  such that $\sum_{j=1}^n P_j(h)^* P_j(h)$ is  $h$-elliptic with left parametrix $L(h)$. 
  We apply our results to studying restrictions of appropriate subsequences $\{u_{h}\}$ of joint eigenfunctions of the $P_j(h)$ for $j=1,\dots, n$.
  Examples include joint eigenfunctions on spheres and tori of revolution, eigenfunctions on hyperellipsoids with distinct axes, eigenfunctions of Neumann oscillators, Lagrange and Kowalevsky tops and spherical pendulum (see \cite{HW} for further examples).

Without loss of generality, we assume that $P_{j} \in \Psi_{h}^{2}(M)$ for $j=1,..,n.$
and also assume that 
  $$P_1(h) = -h^2 \Delta_g,  \quad \text{or} \quad  P_1(h) = - h^2 \Delta_g +V.$$
   All QCI systems on compact manifolds that we are aware of satisfy these properties.

   Let $${\mathcal P} = (p_1,...,p_n): T^*M \to \R^n$$
    be the associated moment map where $p_j = \sigma(P_j(h))$, and suppose $E= (E_1,...,E_n) \in  {\mathcal P}(T^*M)$ is a regular value of the moment map.  By Liouville-Arnold, the level set
$$ \Lambda\sub{E}:= \{ (x,\xi) \in T^*M: \,\, p_{j}(x,\xi) = E_j, \,\, j=1,...,n \}$$
is a finite union of $\R$-Lagrangian tori. To simplify the writing somewhat we assume here that $\Lambda_E$ is connected.  Let $\pi: T^*M \to M$ be the canonical projection and $\pi_{\Lambda_E}$ be its restriction to $ \Lambda\sub{E}$.

Let $u_{E,h} \in C^{\infty}(M)$ be joint eigenfunctions of the $P_j(h)$'s with joint eigenvalues $E_j(h) = E_j + o(1).$ Then, since 
$$
Q(h) u_{h,E} = 0, \qquad Q(h):=L(h)W\sub{E}(h) \in \Psi^0_h(M),
$$
$$
 W\sub{E}(h):=\sum_{j=1}^n (P_j(h)-E_j(h))^* ( P_j(h) - E_j(h)),
$$
 it follows that if $\mu$ is a defect measure for $\{u_{h,E}\}$, then it concentrates on the torus $\Lambda\sub{E}$. Indeed, it  follows from the quantum Birkhoff normal form expansion for $Q(h)$ near $\Lambda_E$  \cite{TZ03} that 
  $$
  \mu = (2\pi)^{-n} | d\theta_1 \cdots d \theta_n | 
  $$
  where $\theta$'s are the angle variables on the tori $\Lambda_{E}$ and so,
\begin{equation}\label{defect qci}
 K = \supp (\pi_* \mu ) = \pi (\Lambda\sub{E}).
\end{equation}

Let $\tilde{K} \Supset K$ with a closed hypersurface  $H \subset (M\setminus \tilde{K})$ that is is sufficiently close to $K$. Then, if 
  $\Uho\subset  M\setminus \tilde{K}$,  it is not difficult to show that (\cite{GT20} Lemma 3.5)
$$ \sigma( W\sub{E}(h))(x,\xi) = \sum_{j=1}^n (p_j(x,\xi) - E_j)^2 \geq C \langle \xi \rangle ^4, \qquad (x,\xi) \in T^*(M \setminus \tilde{K}),$$
and so, since   $C'' \langle \xi \rangle^4 \leq \sum_j  |p_j(x,\xi)|^2 \leq C' \langle \xi \rangle^4,$ it follows that $Q(h) \in \Psi^0_h(\Uho)$ is  $h$-elliptic. Also,  since $Q(h) u_{E,h} = 0,$ the operator $Q(h)$ is lacunary for the subsequence $\{ u_{E,h} \}$ in the Fermi tube $\Uho.$


An application of  
 Theorem \ref{mainthm1}  (resp. Theorem  \ref{mainthm3}) in the case where $P_1(h)$ is a Laplacian (resp. Schr\"{o}dinger operator) yields the following result.
\begin{theo}\label{qci}
Let $\{u_{E,h}\}$ be a sequence of joint eigenfunctions of the $P_j(h)$'s with joint eigenvalues $E_j(h) = E_j + o(1)$, and  let $\mu$ be the associated defect measure.  There exists  $\tau_0>0$ such that if
 $H \subset M \setminus \pi(\Lambda\sub{E})$   is a closed $C^\infty$ hypersurface with
 $$
 d(H, \pi(\Lambda\sub{E})) < \tau_0,
$$ 
then for any $\ep >0$ there are constants $C_0(\ep)>0$ and $h_0(\ep)>0$ such that 
$$ \| u_h \|\sub{L^2(H)} \geq C_0(\ep)  e^{ - [ d(H, \pi(\Lambda\sub{E})) + \ep] /h},$$
for all  $h \in (0,h_0(\ep)].$ 
\end{theo}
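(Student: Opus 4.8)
The plan is to derive Theorem \ref{qci} as a direct application of Theorem \ref{mainthm1} (when $P_1(h)=-h^2\Delta_g$), respectively of Theorem \ref{mainthm3} (when $P_1(h)=-h^2\Delta_g+V$); all of the work will go into producing a lacunary operator for the joint eigenfunction sequence $\{u_{E,h}\}$ in a thin Fermi tube about $H$ and into locating the support of its defect measure. For the latter, since the $u_{E,h}$ are joint eigenfunctions with joint eigenvalues $E_j(h)=E_j+o(1)$ and $E$ is a regular value of the moment map $\mathcal P$, the standard semiclassical argument---testing $\mu$ against operators of the form $B\,(P_j(h)-E_j(h))$---already forces $\supp\mu\subset\Lambda\sub{E}$; the equality $K:=\supp(\pi_*\mu)=\pi(\Lambda\sub{E})$ that is needed to pin down the precise rate $d(H,\pi(\Lambda\sub{E}))$ follows from the quantum Birkhoff normal form of \cite{TZ03} near the regular torus $\Lambda\sub{E}$, which yields $\mu=(2\pi)^{-n}|d\theta_1\cdots d\theta_n|$ as recorded in \eqref{defect qci}.

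Next I would build the lacunary operator. Fix a closed set $\tilde K\Supset K$ with $H\subset M\setminus\tilde K$ and choose $\tau_1>0$ small enough that the Fermi tube $\mc{U}\sub{H}(\tau_1)\subset M\setminus\tilde K$; this is possible because $H$ is a closed hypersurface with $d(H,K)>0$. On $T^*(M\setminus\tilde K)$ one has, by \cite[Lemma 3.5]{GT20}, the quartic lower bound $\sigma(W\sub{E}(h))(x,\xi)=\sum_{j=1}^n(p_j(x,\xi)-E_j)^2\geq C\langle\xi\rangle^4$, while $\sum_j|p_j(x,\xi)|^2\asymp\langle\xi\rangle^4$ there; composing $W\sub{E}(h)$ with the left parametrix $L(h)$ of the $h$-elliptic operator $\sum_j P_j(h)^*P_j(h)$ therefore produces $Q(h):=L(h)W\sub{E}(h)\in\Psi^0_h(\mc{U}\sub{H}(\tau_1))$ whose principal symbol is bounded below on all of $T^*\mc{U}\sub{H}(\tau_1)$, i.e.\ $Q(h)$ is $h$-elliptic there in the sense of Definition \ref{d:h elliptic}. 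Since $(P_j(h)-E_j(h))u_{E,h}=0$ for each $j$, we get $W\sub{E}(h)u_{E,h}=0$ and hence $Q(h)u_{E,h}=0$ identically, so by Remark \ref{differential} (replacing $Q(h)$ by $Q(h)^*Q(h)$ if a real principal symbol is wanted) $Q(h)$ is a lacunary operator for $\{u_{E,h}\}$ in $\mc{U}\sub{H}(\tau_1)$; thus $\mc{U}\sub{H}(\tau_1)\subset K^c$ is an $H$-lacunary region for the sequence.

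With these two ingredients in place, Theorem \ref{mainthm1} (resp.\ Theorem \ref{mainthm3}) applies with $\tau_0$ taken to be the constant it supplies: for $0<d(H,K)<\tau_0$ it yields, for every $\ep>0$, constants $C_0(\ep),h_0(\ep)>0$ with $\|u_h\|\sub{L^2(H)}\geq C_0(\ep)e^{-[d(H,\pi(\Lambda\sub{E}))+\ep]/h}$ on $(0,h_0(\ep)]$---in the case $P_1(h)=-h^2\Delta_g$ after the routine rescaling normalizing the $P_1$-eigenvalue to $1$, and in the case $P_1(h)=-h^2\Delta_g+V$ via Theorem \ref{mainthm3} with the factor $\beta>\max_{x\in M}|V(x)-E_1|^{1/2}$ in the exponent. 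I expect the single genuinely non-formal step to be the uniform $h$-ellipticity of $Q(h)$ over the \emph{non-compact} $\xi$-fibres of $T^*\mc{U}\sub{H}(\tau_1)$: this is exactly the place where the rigidity of the QCI moment map is used, through the quartic lower bound $\sum_j(p_j-E_j)^2\gtrsim\langle\xi\rangle^4$ away from $K$ together with $\sum_j|p_j|^2\lesssim\langle\xi\rangle^4$, and once it is granted the rest of the argument consists of the identification $K=\pi(\Lambda\sub{E})$, the exact annihilation $Q(h)u_{E,h}=0$, and a straightforward appeal to Theorem \ref{mainthm1} / Theorem \ref{mainthm3}.
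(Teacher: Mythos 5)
Your proposal is correct and follows essentially the same route as the paper: identify $K=\pi(\Lambda\sub{E})$ via the Birkhoff normal form of \cite{TZ03}, build the lacunary operator $Q(h)=L(h)W\sub{E}(h)$ using the quartic lower bound of \cite[Lemma 3.5]{GT20} for $h$-ellipticity off $\tilde K$ together with $Q(h)u_{E,h}=0$, and then invoke Theorem \ref{mainthm1} (resp.\ Theorem \ref{mainthm3}). Your side remark that the Schr\"odinger case carries the extra factor $\beta$ in the exponent is a fair observation about the statement as written, but it does not affect the validity of the argument.
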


 \end{document}